\documentclass[]{amsart}
\usepackage{amsmath,amsfonts, mathtools,amssymb,latexsym,amssymb,amsthm,adjustbox,mathrsfs,amsbsy, bm,tikz-cd,indentfirst, makecell,graphicx, verbatim, calc, enumerate,xy}
\usepackage[top=1in,bottom=1.3in,left=1in,right=1in]{geometry}
\usepackage[colorlinks=true,linkcolor=blue]{hyperref}
\usepackage{listings}
\usepackage{xcolor}
\theoremstyle{plain}
\newtheorem{theorem}{Theorem}[section]
\newtheorem{lemma}[theorem]{Lemma}

\newtheorem{corollary}[theorem]{Corollary}
\newtheorem{proposition}[theorem]{Proposition}
\newtheorem{remark}[theorem]{Remark}
\newtheorem{definition}[theorem]{Definition}

\newtheorem{question}[theorem]{Question}

\newtheorem{example}[theorem]{Example}
\newtheorem{point}[theorem]{}

\newcommand{\F}{\mathbb{F}}
\newcommand{\G}{\mathbb{G}}
\newcommand{\m}{\mathfrak{m}}
\newcommand{\p}{\mathfrak{p}}
\newcommand{\q}{\mathfrak{q}}
\newcommand{\n}{\mathfrak{n}}

\newcommand{\Z}{\mathbb{Z}}
\newcommand{\N}{\mathbb{N}}

\newcommand{\reg}{\operatorname{reg}}
\newcommand{\cx}{\operatorname{cx}}

\newcommand{\Ass}{\operatorname{Ass}}
\newcommand{\Syz}{\operatorname{Syz}}

\newcommand{\projdim}{\operatorname{projdim}}
\newcommand{\Hom}{\operatorname{Hom}}

\newcommand{\Ext}{\operatorname{Ext}}
\newcommand{\Tor}{\operatorname{Tor}}

\usepackage{graphicx}
\usepackage{subfigure}
\usepackage{epsfig}
\usepackage{epstopdf}
\usepackage{float}
\theoremstyle{plain}

\usepackage{xcolor}
\usepackage{titlesec}

\titleformat{name=\section}{}{\thetitle.}{0.5em}{\centering\scshape\large}

\begin{document}
\title{\large \textbf{Regularity and $v$-number of syzygies of graded maximal Cohen-Macaulay modules}}

\author{Samarendra Sahoo}
\email{samarendra.s.math@gmail.com}
\address{ Indian Institute of Technology Dharwad, Dharwad 580011, Karnataka, India}
\date{\today}

\subjclass{Primary 13D02, 13A02, 13C14, 13D45}
\keywords{Graded rings and modules, Complete intersections, Complexity, Syzygy, Castelnuovo-Mumford regularity, v-number}

\begin{abstract}
Let $A=Q/(\mathbf f)$, where $Q$ is a polynomial ring over an infinite field $k$ and $\mathbf f=f_1,\ldots,f_c$ is a homogeneous $Q$-regular
sequence. For $1\leq j\leq c$, set $w_j=\operatorname{deg}(f_j)>0.$ Let $M$ be a finitely generated graded $A$-module of infinite projective dimension. Set $M_i=\operatorname{Syz}_i^A(M)$.  We prove that, for each $l\in\{0,1\}$, the sets $\operatorname{Ass}_A(M_{2i+l})$ eventually stabilize, and that $\operatorname{reg}(M_{2i+l})$ is eventually bounded above and below by linear polynomials. If $w_j=w$ for all $1\leq j\leq c$, then the functions
$i\longmapsto \operatorname{reg}(M_{2i+l})$, $i\longmapsto v(M_{2i+l}),$ and $i\longmapsto v_{\mathfrak p}(M_{2i+l}),\, \mathfrak p\in\mathcal A_l,$ are eventually linear with leading coefficient $w$, where $\mathcal A_l$ denotes the stable set of associated primes.
\end{abstract}

\maketitle

\section{Introduction}

Castelnuovo-Mumford regularity provides important information about graded modules, including bounds on the degrees of minimal generators of syzygies and the vanishing of local cohomology. Understanding its asymptotic behavior has been a central theme in commutative algebra. In particular, eventual linearity phenomena have been studied for powers of ideals, as well as for families of Tor and Ext modules; see, for instance,  \cite{irregularity}, \cite{trungcut}, \cite{greg},  \cite{ghoshtony}, and \cite{kodiyalam}. 
Another numerical invariant of interest is the $v$-number. The notion of the Vasconcelos invariant of a homogeneous ideal in a polynomial ring over a field was introduced in \cite{cooper} in connection with the asymptotic behavior of the minimum distance of projective Reed-Muller type codes. This invariant, now commonly referred to as the $v$-number, was later extended from homogeneous ideals to graded modules in \cite[Definition 1.4]{ghoshluca}. Its asymptotic behavior for graded modules has since been investigated in several works; see, for example, \cite{ghoshv} and \cite{ghoshs}.

Motivated by these developments, in this paper we study the asymptotic behavior of Castelnuovo-Mumford regularity and the $v$-number of syzygies of finitely generated graded modules over graded complete intersection rings. More precisely, we investigate eventual linearity, local and global $v$-numbers, and their relation with regularity.

Let $R=k[x_1,\ldots,x_n]$ be a standard graded algebra over a field $k$, and let $M$ be a finitely generated graded $R$-module. For $\mathfrak{p} \in \Ass_R(M)$, the local $v$-number of $M$ at $\p$ is defined as $v_{\p}(M)
:= \inf\{n : \p = (0 :_R x)\text{ for some } x \in M_n\}.$ The global $v$-number (or $v$-number) of $M$ is defined by $v(M)=\min\left\{v_{\p}(M):\p\in\Ass_R(M)\right\}.$ By convention, $v(0)=\infty$.


Throughout the paper, we work with the following setup.

\begin{point}\label{hypo}
\normalfont
\textbf{Hypothesis.}

Let $Q=k[X_1,\ldots,X_n]$, where $k$ is an infinite field, and let $\n=(X_1,\ldots,X_n)$ be the unique homogeneous maximal ideal
of $Q$. Let $\mathbf f=f_1,\ldots,f_c$ be a homogeneous $Q$-regular sequence contained in $\n$, and set $A=Q/(\mathbf f)$ and $\m=\n/(\mathbf f)$ be the unique homogeneous maximal ideal of $A.$ For $1\leq j\leq c$, write $w_j=\deg(f_j)>0.$ Let $\dim(A)=d=n-c$. Let $M$ and $N$ be finitely generated graded $A$-modules such that $\operatorname{projdim}_A(M)=\infty$. Let $$
\mathbb F:\quad
\cdots
\longrightarrow F_i \xrightarrow{\partial_i}
F_{i-1}
\longrightarrow\cdots\longrightarrow F_1 \xrightarrow{\partial_1}
F_0\longrightarrow M\longrightarrow 0
$$
be a minimal graded free resolution of $M$ over $A$. For all $i\geq0$, set $M_i=\Syz_i^A(M),$ with $M_0=M$.
\end{point}
 In this paper, we are mainly interested in the following questions.

\begin{question}\label{q}
Under the hypotheses of \ref{hypo} and for each $l\in \{0,1\}$,
\begin{enumerate}
    \item Does the sequence of associated-prime sets
    $\{\Ass_A(M_{2i+l})\}_{i\geq0}$ eventually stabilize?

    \item If so, for $\mathfrak p$ belonging to the stable set of associated primes, are the functions
    $i\longmapsto v_{\mathfrak p}(M_{2i+l})$ and  $i\longmapsto v(M_{2i+l})$ eventually linear?

    \item Is the function $i\longmapsto \reg(M_{2i+l})$ eventually linear?
\end{enumerate}
\end{question}

 For the definition of regularity, see \ref{defreg}. In this paper, we give affirmative answers to the above questions under suitable hypotheses. More precisely, we prove that Question \ref{q}(1) has an affirmative answer; in fact, the corresponding result holds also in the local case. If, in addition,
$\deg(f_j)=w \text{ for all }1\leq j\leq c,$
then Questions \ref{q}(2) and \ref{q}(3) also have affirmative answers. Moreover, in both cases, the eventual linear functions have leading coefficient $w$.

Since $A$ is Cohen-Macaulay, $M_i$ is maximal Cohen-Macaulay for all $i\geq\dim(A)$. As our results concern asymptotic behavior, after
replacing $M$ by a sufficiently high syzygy and reindexing, we may assume that $M$ is maximal Cohen-Macaulay. This replacement does not affect the eventual conclusions, except possibly for interchanging the even and odd subsequences.

We begin with the asymptotic behavior of the regularity of syzygies.

\begin{theorem}[Theorems \ref{linearbound} and \ref{Reg-syzygy}]
\label{first}
Assume the hypotheses of \ref{hypo}. Let $M$ be a graded maximal
Cohen--Macaulay $A$-module and let $l\in\{0,1\}$. Then the following hold.
\begin{enumerate}
    \item The function $i\longmapsto \reg(M_{2i+l})$ is eventually bounded above and below by linear polynomials.

    \item Suppose that $\deg(f_j)=w$ for all $1\leq j\leq c$. Then, for
    each $l\in\{0,1\}$, the function
    $ i\longmapsto \reg(M_{2i+l})$
    is eventually given by a linear polynomial with leading coefficient $w$.
\end{enumerate}
\end{theorem}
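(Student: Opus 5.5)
The plan is to translate regularity of syzygies of a maximal Cohen--Macaulay module into the degrees of generators of the minimal free resolution. Then I will use the Eisenbud / Gulliksen structure of resolutions over complete intersections to control those degrees.

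\emph{Step 1 (reduction to generator degrees).} Since $M$ is maximal Cohen--Macaulay, every $M_i$ is maximal Cohen--Macaulay. Thus $H^j_\m(M_i)=0$ for $j<d$, and $\reg(M_i)=a_d(M_i)+d$ is governed by the top local cohomology. Set $t_i(M)=\max\{j:\Tor_i^A(M,k)_j\neq 0\}$. Over the Cohen--Macaulay ring $A$, with $k$ infinite, we may reduce modulo a general linear $M$-regular (and $A$-regular) sequence $\mathbf x$ of length $d$. This preserves regularity, syzygies and graded Betti numbers, so $\bar A=A/(\mathbf x)$ is an artinian complete intersection and $\bar M_i=\Syz_i^{\bar A}(\bar M)$. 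For the artinian module $\bar M_{i}$ we have $\reg(\bar M_i)=\max\{j:(\bar M_i)_j\ne0\}$. Since $\bar M_i\subseteq \bar F_{i-1}$ and $\bar M_i$ is generated in degrees $\le t_i$, this gives
\[
t_i(M)\le \reg(M_i)\le t_{i-1}(M)+\reg(\bar A)=t_{i-1}(M)+\textstyle\sum_j (w_j-1).
\]
Hence it suffices to show that $i\mapsto t_{2i+l}(M)$ is eventually bounded between linear functions, and is eventually linear with slope $w$ in the equal degree case. The two-sided bound above then transfers linearity of $t$ to regularity only up to a bounded error. To get exact linearity I will instead use $\reg(M_i)=\max_j\{t_{i+j}(M)-j\}$, computed over $A$, which holds because the regularity of an $A$-module equals its $A$-regularity, finite here since $A$ is Koszul-like in the relevant sense. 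Failing that, I will apply the argument below directly to the graded module $\bigoplus_i H^d_\m(M_i)$.

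\emph{Step 2 (module structure over the ring of operators).} Let $S=k[\chi_1,\dots,\chi_c]$ with $\deg\chi_j=(2,-w_j)$ (cohomological degree, internal degree) be the Eisenbud operators. Then $E=\bigoplus_{i,j}\Ext^i_A(M,k)_j$ is a finitely generated bigraded $S$-module (Gulliksen/Avramov). Here $t_i(M)=-\min\{j:\Ext^i_A(M,k)_{j}\ne0\}$ after the sign convention. Splitting $E=E^{\mathrm{ev}}\oplus E^{\mathrm{odd}}$ gives two finitely generated modules over $S$ regraded with $\deg\chi_j=(1,-w_j)$. For a finitely generated bigraded module over a polynomial ring with generators of degrees $(1,-w_j)$, the internal degrees occurring in homological degree $i$ lie in a window $[-\max_j w_j\, i - C,\ -\min_j w_j\, i + C]$. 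This gives part (1): $\min w_j\, i - C\le t_{2i+l}\le \max w_j\, i + C$, and the same bounds hold for regularity by Step 1.

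\emph{Step 3 (equal degrees).} When all $w_j=w$, the module $E^{\mathrm{ev}}$ becomes, after the shift $j\mapsto j+wi$, a finitely generated graded module over a standard graded polynomial ring whose components are finite-dimensional graded pieces. The extremal internal degree in homological degree $i$ is then eventually of the form $wi+b$. I expect this to follow from a Kodiyalam-type argument: the minimal generator-degree function of $E^{\mathrm{ev}}_i$ is eventually constant after the twist, since $E_i^{\mathrm{ev}}$ is eventually nonzero (because $\projdim M=\infty$, so complexity is at least $1$) and the degrees of $S_1E_i$ stabilize by Noetherianity. The main obstacle is this step: showing the extremal degree is \emph{exactly} eventually linear rather than merely bounded. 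I would try first to realize it as the eventual initial degree of a finitely generated graded module over $k[\chi_1,\dots,\chi_c]$ with all $\chi_j$ of the same weight, for which eventual linearity follows from Hilbert-function/Noetherian stabilization. I would then pass back to $\reg(M_{2i+l})$ through the Step 1 identity, obtaining leading coefficient $w$.
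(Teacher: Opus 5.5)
Part (1) of your proposal works. Reducing modulo a linear system of parameters is regular on every $M_i$, since all the $M_i$ are maximal Cohen--Macaulay. It gives the same sandwich $t_i(M)\le\reg(M_i)\le t_i(M)+\reg(A)$ that the paper obtains from the short exact sequence of top local cohomology. (Since $\bar M_i$ is a quotient of $\bar F_i$, you may use $t_i$ rather than $t_{i-1}$ in the upper bound.) Your degree window over $k[\chi_1,\ldots,\chi_c]$ gives cruder bounds than Proposition~\ref{shift}, with slopes $\min_jw_j$ and $\max_jw_j$, but that is enough for (1).

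\textbf{The gap is in part (2).} You correctly see that the sandwich determines $\reg(M_{2i+l})$ only up to an error in $[0,\reg(A)]$. However, the identity you invoke to remove that error, $\reg(M_i)=\max_j\{t_{i+j}(M)-j\}$, is false.
\begin{itemize}
\item The right-hand side can be $+\infty$. In the paper's example $A=k[X,Y]/(X^2Y)$, $M=A/(x)\oplus A/(y)$, one has $t_{2i}(M)=3i$, so $t_{i+j}(M)-j$ is unbounded in $j$. Such an $A$ is not quadratic, hence not Koszul.
\item Even for Koszul $A$ the two sides differ, because Tor-regularity over $A$ does not see $\reg(A)$. In Example~\ref{exam2}, $t_n(M)=n$, so the right side is $i$ while $\reg(M_i)=i+1$.
\end{itemize}
Your fallback, running the argument on $\bigoplus_iH^d_{\m}(M_i)$, is not an argument. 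You exhibit no ring over which this sum of Artinian modules, or its dual, is finitely generated.

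You also misplace the difficulty. Eventual linearity of $t_{2i+l}(M)$ in the equal-degree case is the easy part. When all $w_j=w$, the quantity $a+wi$ is invariant under the $\chi_j$. So $\Ext_A^{\mathrm{even}}(M,k)$ splits into finitely many strands, each a finitely generated standard graded $k[\chi_1,\ldots,\chi_c]$-module. Each strand is either eventually zero or nonzero in all large degrees, which gives $t_{2i+l}(M)=wi+e_l$ for $i\gg0$.

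\textbf{The missing idea} is a monotonicity inequality, which the paper supplies via Proposition~\ref{cxM}.
\begin{itemize}
\item Because all $w_j=w$, a general linear form $\alpha=\sum_ja_j\chi_j$ is bihomogeneous of bidegree $(2,-w)$.
\item Since $k$ is infinite, $\alpha$ can be chosen with $(0:_E\alpha)$ of finite length, where $E=\Ext_A^*(M,k)$. Then $\alpha$ is injective on $\Ext_A^i(M,k)$ for $i\gg0$.
\item Dualizing and applying Nakayama, the components $F_{i+2}\to F_i(-w)$ of the corresponding chain map are surjective. Hence the induced maps $M_{i+2}\to M_i(-w)$ are surjective.
\item Since $H^{d+1}_{A_+}=0$, the map $H^d_{A_+}(M_{i+2})\to H^d_{A_+}(M_i(-w))$ is surjective. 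All these modules are maximal Cohen--Macaulay, so $\reg(M_i)+w\le\reg(M_{i+2})$ for $i\gg0$.
\item Write $\reg(M_i)=t_i(M)+\delta_i$ with $0\le\delta_i\le\reg(A)$. Together with $t_{2i+l+2}(M)=t_{2i+l}(M)+w$, this gives $\delta_{2i+l}\le\delta_{2i+l+2}$ eventually.
\item Each parity of $\{\delta_i\}$ is then a bounded non-decreasing integer sequence, hence eventually constant.
\end{itemize}
This is exactly the proof of Theorem~\ref{Reg-syzygy}. It is also where the equal-degree hypothesis is essential: without it there is no single homogeneous operator of internal degree $w$.
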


The preceding theorem describes the asymptotic behavior of the regularity
of the syzygies. We next turn to their associated primes and $v$-numbers.
We first show that the associated primes stabilize along each parity.

\begin{theorem}[Theorem \ref{stable-ass}]
\label{second}
Assume the hypotheses of \ref{hypo}. Let $M$ be a graded maximal Cohen-Macaulay $A$-module. Then, for each $l\in\{0,1\}$, the sets
$\Ass_A(M_{2i+l})$ are eventually stable.
\end{theorem}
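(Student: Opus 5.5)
The plan is to exploit the Eisenbud operators and the graded module structure of $\Ext$ over the ring of cohomology operators. Let $\mathcal S=A[\chi_1,\ldots,\chi_c]$ be the polynomial ring with $\deg\chi_j=(2,-w_j)$, i.e.\ cohomological degree $2$ and internal degree $-w_j$. By Gulliksen and Avramov, $E=\bigoplus_{i}\Ext^i_A(M,A/\p)$ and $\bigoplus_i \Ext^i_A(M,N)$ are finitely generated graded $\mathcal S$-modules.

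For a maximal Cohen--Macaulay module over a Gorenstein ring, the associated primes of $M_i$ can be detected by $\Hom$ into $M_i$. Since $M_i$ is MCM, $\p\in\Ass_A(M_i)$ if and only if $\p$ is a minimal prime of $A$ (so $\dim A/\p=d$) and $(M_i)_\p\neq 0$. Indeed, an MCM module has only associated primes of dimension $d$, and these are exactly the minimal primes of $A$ in its support. So it suffices to show that, for each of the finitely many minimal primes $\p$ of $A$, the condition $(M_{2i+l})_\p\neq0$ is eventually constant in $i$ for each parity $l$.

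Now localize. $A_\p$ is an Artinian complete intersection (a zero-dimensional local ring), $(M_i)_\p$ is a syzygy of $M_\p$ over $A_\p$ up to free summands, and $\Syz_i^{A}(M)_\p\cong \Syz_i^{A_\p}(M_\p)\oplus(\text{free})$. Hence $(M_i)_\p\ne0$ for all large $i$ unless either $M_\p$ has finite projective dimension over $A_\p$ (then free, since depth $0$) or the free part vanishes. So the key is the eventual behavior of the ranks and Betti numbers in the localized resolution. The localized resolution $\mathbb F_\p$ has free modules $(F_i)_\p$ of rank $\beta_i^A(M)$, and $(M_i)_\p=\operatorname{im}(\partial_i)_\p$, which is nonzero iff $(\partial_i)_\p\ne0$. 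Equivalently, $(M_i)_\p=0$ iff $\Tor$-vanishing holds in the form $\Ext^i_A(M,A/\p)\to\ldots$ degenerates. More concretely, I would express $(M_{i})_\p\neq 0$ as $\Ext^{i}_A(M, A)_\p$-nonvanishing via duality. Since $M$ is MCM and $A$ is Gorenstein, $M_i^{*}\cong \Ext$-cosyzygies, and $(M_i)_\p\neq0 \iff (M_i^*)_\p\ne 0$.

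The main step is that the set of $i$ of fixed parity with $(M_i)_\p\neq0$ is eventually all or nothing. Here I would use that $\mathcal E=\bigoplus_i\Ext^i_A(M,k)$, or better $\bigoplus_i \Ext^i_{A}(M,A/\p)\otimes A_\p$, is finitely generated over $\mathcal S_\p=A_\p[\chi]$. The even and odd parts are finitely generated over the subring $A_\p[\chi]$, whose grading is concentrated in even degrees. A finitely generated graded module over a polynomial ring either has eventually vanishing components in a given parity or, if some high component is nonzero, it is not torsion with respect to $(\chi)$, and then all high components of that parity are nonzero. The latter requires an argument that $\chi$ acts injectively eventually (Artin--Rees type, take the $\chi$-torsion submodule of finite length in cohomological degree). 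Combined with the fact that $(\partial_i)_\p\ne0$ is equivalent to the nonvanishing of $\Ext^{i}_{A_\p}(M_\p, k(\p))$ minus free contributions, this gives the stabilization. I expect this identification, relating the nonvanishing of $(M_i)_\p$ to the nonvanishing of a graded component of a finitely generated $\mathcal S$-module, and handling the parity, to be the main obstacle. The fallback is the simpler dichotomy: if $\projdim_{A_\p} M_\p<\infty$, then $M_\p$ is free and all $(M_i)_\p=0$ for $i\ge1$. Otherwise all Betti numbers of $M_\p$ over $A_\p$ are eventually positive, which forces $(M_i)_\p\ne0$ for all large $i$. This would give stabilization even without separating parities.
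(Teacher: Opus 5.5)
\textbf{There is a genuine gap.} Your reduction is sound. Since each $M_i$ is maximal Cohen--Macaulay, $\Ass_A(M_i)=\operatorname{Min}(A)\cap\operatorname{Supp}(M_i)$. So it suffices to show that, for each minimal prime $\p$, the condition $(M_{2i+l})_\p\neq0$ is eventually constant in $i$. However, the step that does this is never carried out, and none of the mechanisms you offer for it works.

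\begin{itemize}
\item The $\Ext$-over-operators route only sees the \emph{minimal} resolution of $M_\p$ over $A_\p$. Indeed, $\Ext^i_A(M,A/\p)_\p=\Ext^i_{A_\p}(M_\p,k(\p))$ detects only $\Syz_i^{A_\p}(M_\p)$. But $(M_i)_\p\cong\Syz_i^{A_\p}(M_\p)\oplus(\text{free})$, and this free summand, coming from the non-minimal localized resolution, is exactly what is invisible to it. You flag this yourself as the main obstacle.
\item The duality reformulation carries no information. Since $M$ is MCM over the Gorenstein ring $A$, $\Ext^i_A(M,A)=0$ for all $i\geq1$.
\item The fallback is false on the finite projective dimension side: $M_\p$ free does not give $(M_i)_\p=0$. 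Take $A=k[X,Y]/(XY)$ and $M=A/(x)$. Then $M_{2i}\cong A/(x)(-2i)$ and $M_{2i+1}\cong A/(y)(-2i-1)$. At $\p=(x)$ the ring $A_\p\cong k(Y)$ is a field, so $\projdim_{A_\p}M_\p=0$. Yet $(M_{2i})_\p\neq0$ and $(M_{2i+1})_\p=0$ for all $i$.
\end{itemize}

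In that example $\Ass_A(M_{2i})=\{(x)\}$ and $\Ass_A(M_{2i+1})=\{(y)\}$. So your closing claim of stabilization ``even without separating parities'' is also false. Parity matters precisely in the case your dichotomy gets wrong; only its infinite-projective-dimension half is correct.

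What is missing is control of the free parts. The paper gets this by induction on $\cx_A(M)$. In complexity one, $M_{i+2}\cong M_i$ eventually. In general, for $i\gg0$ there are exact sequences $0\to K_i\to M_{i+2}\to M_i\to0$ with $\cx_A(K_i)=\cx_A(M)-1$ and $K_{i+1}\cong\Syz_1^A(K_i)$. Induction then gives stable sets $\mathcal B_l=\Ass_A(K_{2i+l})$. The inclusions $\mathcal B_l\subseteq\Ass_A(M_{2i+l+2})\subseteq\Ass_A(M_{2i+l})\cup\mathcal B_l$ make $\Ass_A(M_{2i+l})\setminus\mathcal B_l$ a descending chain in the finite set $\operatorname{Min}(A)$.

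Alternatively, you could repair your own localization approach by tracking lengths rather than the Betti numbers of $M_\p$. Localizing $0\to M_{i+1}\to F_i\to M_i\to0$ at $\p$ gives $\ell((M_{i+2})_\p)-\ell((M_i)_\p)=\ell(A_\p)\bigl(\beta^A_{i+1}(M)-\beta^A_i(M)\bigr)$. Over a complete intersection, $i\mapsto\beta^A_{2i+l}(M)$ is eventually polynomial. Hence $i\mapsto\ell((M_{2i+l})_\p)$ is eventually a non-negative integer-valued polynomial, so it is either eventually zero or eventually positive. This would be a short proof that avoids the operator $\alpha$ and the infinite residue field, but your proposal as written contains neither argument.
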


In view of Theorem \ref{second}, for each $l\in\{0,1\}$ we may denote the
stable set of associated primes of $M_{2i+l}$ by $\mathcal A_l$. We then
study the asymptotic behavior of the local $v$-number at each prime in
$\mathcal A_l$. When the defining equations of the complete intersection
have the same degree, both the local and global $v$-numbers are eventually
linear, with the same leading coefficient as the regularity.

\begin{theorem}[Theorem \ref{local-v-linearity}]
\label{third}
Assume the hypotheses of \ref{hypo}, and suppose that $\deg(f_j)=w$ for all $1\leq j\leq c$. Let $M$ be a graded maximal
Cohen-Macaulay $A$-module with $\cx_A(M)\geq1$. For each
$l\in\{0,1\}$, let $\mathcal A_l=\Ass_A(M_{2i+l}) \text{for } i\gg0.$ Then, for every $\p\in\mathcal A_l$, there exists $b_{\p,l}\in\mathbb Z$ such that
$v_{\p}(M_{2i+l})=wi+b_{\p,l}$
for all $i\gg0$. Consequently,
$v(M_{2i+l})=wi+b_l$ for all $i\gg0$, where
$b_l=\min_{\p\in\mathcal A_l}b_{\p,l}.$
\end{theorem}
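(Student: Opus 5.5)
The plan is to reduce Theorem \ref{third} to a Noetherian finiteness statement over the ring of cohomology (Eisenbud) operators, in the same spirit as the proof of Theorem \ref{first}(2). By Theorem \ref{stable-ass} we may fix $l$ and a prime $\p\in\mathcal A_l$. After passing to a high syzygy, the associated primes of $M_{2i+l}$ equal $\mathcal A_l$ for all $i$.

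\emph{Step 1: a submodule-theoretic description of $v_{\p}$.} For a graded module $N$ with $\p\in\Ass_A(N)$, set $K_{\p}(N)=(0:_N\p)$ and
\[
U_{\p}(N)=\sum_{\q\in\Ass_A(K_{\p}(N)),\ \q\supsetneq\p}(0:_{K_{\p}(N)}\q).
\]
Then I would check that $v_{\p}(N)$ is the smallest degree in which $K_{\p}(N)/U_{\p}(N)$ is nonzero. Two facts are needed. First, a homogeneous element of $K_{\p}(N)$ whose annihilator strictly contains $\p$ can be written, up to components, in $U_{\p}(N)$, using a primary decomposition of $0$ in $K_{\p}(N)$. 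Second, a homogeneous element not in $U_{\p}(N)$ can be shifted to one of the same degree with annihilator exactly $\p$. This second fact uses that $k$ is infinite and prime avoidance in each graded piece. Hence
\[
v_{\p}(M_{2i+l})=\operatorname{indeg}\bigl(K_{\p}(M_{2i+l})/U_{\p}(M_{2i+l})\bigr).
\]
The problem is thus to show that the initial degree of a subquotient of $\Hom_A(A/\p,M_{2i+l})$ grows linearly with slope $w$.

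\emph{Step 2: operator structure.} Since all $\deg f_j=w$, the Eisenbud operators $t_1,\dots,t_c$ are chain endomorphisms of $\mathbb F$ of homological degree $-2$ and internal degree $w$. They induce homogeneous maps $M_{2i+2+l}(-w)\to M_{2i+l}$, hence maps on $\Hom_A(A/\p,-)$, and these are compatible with the $U_{\p}$'s because $\Ass$ is stable. I would then show that, up to a shift by the first syzygy, $\bigoplus_i\Hom_A(A/\p,M_{2i+l})$ is a finitely generated graded module over $S=A[\chi_1,\dots,\chi_c]$ with $\deg\chi_j=(2,w)$. The route is to identify $\Hom_A(A/\p,M_{i})$ with a subquotient of $\Hom_A(A/\p,F_{i-1})$, equivalently with the relevant piece of $\Ext$/$\Tor$ against $A/\p$, and then invoke Gulliksen's finiteness theorem, as used earlier for the regularity. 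Because $M_{2i+l}$ has the fixed associated-prime set $\mathcal A_l$, the same construction applied to $U_{\p}$ gives a finitely generated $S$-submodule. Therefore $E=\bigoplus_i K_{\p}(M_{2i+l})/U_{\p}(M_{2i+l})$ is a finitely generated bigraded $S$-module.

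\emph{Step 3: linearity.} A finitely generated bigraded module over $k[\chi_1,\dots,\chi_c]$ with all $\chi_j$ in bidegree $(2,w)$ has components $E_i$ whose initial degree is eventually $wi+b$. This follows because the generators lie in finitely many bidegrees, so $\operatorname{indeg}E_i\ge wi-C$. Moreover, the hypothesis $\cx_A(M)\ge1$, together with $k$ infinite, lets one choose a general linear form $\chi$ that is eventually a nonzerodivisor on the relevant part of $E$, so the initial degree does not jump. Both bounds have slope $w$ and $\operatorname{indeg}E_{i+1}\le\operatorname{indeg}E_i+w$. Combined with the lower bound, the integer sequence $\operatorname{indeg}E_i-wi$ is eventually nonincreasing and bounded below, hence eventually constant; call the constant $b_{\p,l}$. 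The global statement then follows immediately, since $v(M_{2i+l})=\min_{\p\in\mathcal A_l}v_{\p}(M_{2i+l})$ is a minimum of finitely many eventually linear functions with the same slope $w$.

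The main obstacle is Step 2, namely making the operators act on the $v$-number subquotients with nonzero action. This requires that the maps $M_{2i+2+l}(-w)\to M_{2i+l}$ carry elements with annihilator exactly $\p$ to such elements, or at least that the induced map on $E$ is eventually injective in the lowest degree. I would first try to obtain this by reducing, via a general change of the regular sequence, to the case $c=1$ along the quotient $Q/(f_1,\dots,f_{c-1})$ (Eisenbud's exact sequences relating $M_{i+2}$ and $M_i$). I would then argue by induction on $\cx_A(M)$.
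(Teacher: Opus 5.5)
\textbf{Verdict: there is a genuine gap in your Step 2, and it is the whole difficulty of the theorem.}

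\emph{The operators go the wrong way.} The only maps the Eisenbud operators give you on syzygies go \emph{down} in homological degree. A linear form $\alpha$ in the $t_j$ induces $M_{i+2}\to M_i(-w)$, hence $(0:_{M_{i+2}}\p)\to(0:_{M_i}\p)(-w)$. These operators lower homological degree by $2$ and internal degree by $w$. Since $A$ preserves homological degree, a module over such operators that is nonzero in infinitely many homological degrees can never be finitely generated. Yet Step 3 uses $\chi$ in the opposite direction, sending a lowest-degree element of $E_i$ to a nonzero element of $E_{i+1}$.

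\emph{Gulliksen's theorem does not apply.} It concerns $\Ext_A^*(M,N)$, and you give no identification of $\Hom_A(A/\p,\Syz_i^A(M))$ with a piece of $\Ext$ or $\Tor$. The sequence $0\to M_{i+1}\to F_i\to M_i\to0$ only exhibits $\Hom_A(A/\p,M_{i+1})$ as a submodule of $\Hom_A(A/\p,F_i)$, whose size grows with the Betti numbers. So the inequality $\operatorname{indeg}E_{i+1}\le\operatorname{indeg}E_i+w$ is assumed, not proved.

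\emph{What is easy and what is not.} The lower bound is easy. A minimal-degree element of $M_{i+2}$ with annihilator $\p$ either lies in $\ker\alpha$, or maps to a nonzero element of $M_i(-w)$ whose annihilator is again $\p$, because all associated primes are minimal. This gives $v_\p(M_{i+2})\ge\min\{v_\p(\ker\alpha),\,v_\p(M_i)+w\}$. The upper bound needs a lifting argument, and lifting is exactly where control is lost: if $(0:_Ay)=\p$ and $x\mapsto y$, you only know $\p x\subseteq\ker\alpha$.

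\emph{How the paper closes this.} It argues by induction on $\cx_A(M)$, using the sequences $0\to K_i\to M_{i+2}\to M_i(-w)\to0$ of Proposition \ref{cxM}. The kernels $K_i$ have smaller complexity and stable associated primes $\mathcal B_l$.
\begin{itemize}
\item If $\p\in\mathcal B_l$, the inclusion $K_{2i+l}\subseteq M_{2i+l+2}$ and the inductive linearity of $v_\p(K_{2i+l})$ bound $v_\p(M_{2i+l})-wi$ from above.
\item If $\p\notin\mathcal B_l$, the composite surjection $M_{i_0+2t}\twoheadrightarrow M_{i_0}(-tw)$ has kernel $L_t$ with $\Ass_A(L_t)\subseteq\mathcal B_l$. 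Lemma \ref{uniform-annihilator} gives a homogeneous $a\notin\p$ and an exponent $e$ with $a^{e}L_t=0$ for all $t$. For a lift $x$ of a minimal-degree $y$, the element $a^{e}x$ is nonzero with annihilator exactly $\p$. Hence $v_\p(M_{i_0+2t})\le v_\p(M_{i_0})+tw+e\deg(a)$.
\end{itemize}
Your closing sentence mentions induction on complexity, but without this lifting-and-annihilator argument (or a substitute) the proposal does not prove the theorem.

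\emph{A separate problem with Step 1.} Step 1 is false as stated in general. Over $k[x,y]$, take $\p=(x)$ and $N=k[x,y]/(x,y^2)\oplus\bigl(k[x,y]/(x)\bigr)(-1)$. The class of $1$ in the first summand lies outside $U_\p(N)$ in degree $0$, but $v_\p(N)=1$. The correct subtrahend is $\ker\bigl((0:_N\p)\to N_\p\bigr)$. This is harmless here: associated primes of maximal Cohen--Macaulay modules are minimal, so $U_\p=0$ and $v_\p(N)=\operatorname{indeg}(0:_N\p)$.
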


We also study the corresponding invariants of the free modules appearing
in the minimal graded free resolution of $M$. Under the hypotheses of
\ref{hypo}, for each $l\in\{0,1\}$, the functions
$i\longmapsto \reg(F_{2i+l}),\,
i\longmapsto v(F_{2i+l}),
\text{ and }\,
i\longmapsto v_{\p}(F_{2i+l}),$
where $\p\in\Ass_A(A)=\operatorname{Min}(A)$, are eventually linear, with
leading coefficients in $\{w_1,\ldots,w_c\}$. Moreover, we show that the eventual leading
coefficients of the local and global $v$-numbers are at most the
corresponding leading coefficient of the regularity. In the Artinian case, equality $\reg(F_i)=v(F_i)$ for all $i$ characterizes purity of the minimal graded free resolution.

\medskip

\noindent
\textbf{Organization of the paper.}
The paper is organized as follows. In Section~2, we collect the necessary preliminaries and develop the tools used throughout the paper. In
Section~3, we study the asymptotic behavior of the regularity of syzygies and prove Theorem \ref{first}. In Section~4, we establish Theorems \ref{second} and \ref{third}. Finally, in Section~5, we compare regularity and $v$-numbers of the free modules and syzygies, discuss the Artinian case, and provide examples illustrating the linearity results.

\section{Preliminaries}

Throughout this article, all rings are assumed to be commutative Noetherian rings with identity, and all modules are assumed to be finitely generated. In this section, we discuss some preliminary results that we need.

\begin{point}\label{defreg}
\normalfont
Let $A=A_0[x_1,\ldots,x_d]$ be a standard $\mathbb{N}$-graded ring, i.e.,
$\deg(A_0)=0$ and $\deg(x_i)=1$ for $1\leq i\leq d$. We say $A$ is $*$-local if $A_0$ is a local ring. Set
$A_+:=\bigoplus_{n\geq1}A_n$. Let $M$ be a finitely generated
$\mathbb{Z}$-graded $A$-module. Set
$\operatorname{end}(M)=\sup\{n\in\mathbb{Z}\mid M_n\neq0\}.$
By convention, $\operatorname{end}(0)=-\infty$. Let $H^i_{A_+}(M)$ denote the $i$th graded local cohomology module of $M$
with respect to $A_+$. Set
$a_i(M)=\operatorname{end}H^i_{A_+}(M)$ for all $i\geq0$. The
Castelnuovo-Mumford regularity of $M$ is defined to be
$$
\reg(M):=\max\{a_i(M)+i\mid i\geq0\}.
$$
Note that $M=0$ if and only if $\reg(M)=-\infty$.
\end{point}

The following well-known lemma describes the behavior of Castelnuovo-Mumford regularity with respect to short exact sequences. We will use it repeatedly throughout the paper.

\begin{lemma}\label{reg}
Let $A$ be as above. If $0\longrightarrow M'\longrightarrow M\longrightarrow M''\longrightarrow0
$ is a short exact sequence of finitely generated $\mathbb{Z}$-graded $A$-modules, then the following inequalities hold:
\begin{enumerate}
    \item[(i)] $\reg(M')\leq
    \max\{\reg(M),\reg(M'')+1\}$.

    \item[(ii)] $\reg(M)\leq
    \max\{\reg(M'),\reg(M'')\}$.

    \item[(iii)] $\reg(M'')\leq
    \max\{\reg(M')-1,\reg(M)\}$.

\end{enumerate}
 Moreover, if $M=M'\oplus M''$, then $\reg(M)=\max\{\reg(M'),\reg(M'')\}.$
\end{lemma}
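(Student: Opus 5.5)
The plan is to take the long exact sequence of graded local cohomology with respect to $A_+$ attached to $0\to M'\to M\to M''\to 0$,
$$\cdots\to H^{i-1}_{A_+}(M'')\to H^i_{A_+}(M')\to H^i_{A_+}(M)\to H^i_{A_+}(M'')\to H^{i+1}_{A_+}(M')\to\cdots,$$
and derive each inequality by comparing end degrees. All maps are homogeneous of degree zero. The basic observation is this: if $X\to Y\to Z$ is exact in the middle, then $\operatorname{end}(Y)\leq\max\{\operatorname{end}(X),\operatorname{end}(Z)\}$. Indeed, in any degree $n$ above both ends, $X_n=Z_n=0$, so $Y_n=0$. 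After that, it only remains to track the index shifts.

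For (ii), I apply the observation to $H^i(M')\to H^i(M)\to H^i(M'')$. This gives $a_i(M)+i\leq\max\{a_i(M')+i,\,a_i(M'')+i\}\leq\max\{\reg M',\reg M''\}$, and taking the maximum over $i$ gives (ii).

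For (i), I use $H^{i-1}(M'')\to H^i(M')\to H^i(M)$. This gives $a_i(M')+i\leq\max\{a_{i-1}(M'')+(i-1)+1,\ a_i(M)+i\}\leq\max\{\reg M''+1,\reg M\}$. At $i=0$ the left term $H^{-1}$ is zero, and the bound still holds.

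For (iii), I use $H^i(M)\to H^i(M'')\to H^{i+1}(M')$. This gives $a_i(M'')+i\leq\max\{a_i(M)+i,\ a_{i+1}(M')+(i+1)-1\}\leq\max\{\reg M,\reg M'-1\}$.

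For the direct sum, local cohomology commutes with finite direct sums, so $H^i_{A_+}(M'\oplus M'')=H^i_{A_+}(M')\oplus H^i_{A_+}(M'')$. Hence $a_i(M)=\max\{a_i(M'),a_i(M'')\}$, and taking the maximum over $i$ gives the equality.

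There is no real obstacle. The only care needed is with conventions: $\operatorname{end}(0)=-\infty$, the case where some module is zero, and the fact that the ends of the local cohomology modules are finite. The latter holds because these modules are Artinian-type: $H^i_{A_+}(M)_n=0$ for $n\gg0$ when $M$ is finitely generated. It is also needed that only finitely many $H^i$ are nonzero, since $H^i_{A_+}=0$ for $i>d$. This makes $\reg$ a well-defined element of $\mathbb Z\cup\{-\infty\}$, so the maxima above make sense.
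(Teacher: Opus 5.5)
Your proof is correct. The paper cites this lemma as well known and gives no proof; your argument is the standard one: compare end degrees along the long exact sequence of graded local cohomology with respect to $A_+$, and use additivity of $H^i_{A_+}$ over direct sums for the last claim. One wording fix: $H^i_{A_+}(M)$ need not be Artinian when $A_0$ is not Artinian, but you only use that $H^i_{A_+}(M)_n=0$ for $n\gg0$ and that $H^i_{A_+}(M)=0$ for $i>d$, and both hold for finitely generated graded $M$.
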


\begin{point}\label{regpoly}
\normalfont
Let $Q=k[X_1,\ldots,X_d]$ be a polynomial ring over a field $k$ with the standard grading, i.e., $\deg(X_i)=1$ for $1\leq i\leq d$. Let $M$ be a finitely generated $\mathbb Z$-graded $Q$-module. In this setting, $\reg(M)$ can also be expressed in terms of the degrees of the homogeneous generators of the syzygies of $M$
(see \cite[Theorem 4.3]{Eisenbudreg}).

Let
$$
0\longrightarrow F_{d'}
\longrightarrow F_{d'-1}
\longrightarrow\cdots
\longrightarrow F_1
\longrightarrow F_0
\longrightarrow M
\longrightarrow0
$$
be a minimal graded free resolution of $M$ over $A$, where
$\displaystyle 
F_i=\bigoplus_{s\in\mathbb Z}Q(-s)^{\beta_{i,s}}$
for $0\leq i\leq d'$. The integers $\beta_{i,s}$ are called the
$(i,s)$th graded Betti numbers of $M$. It is well known that
$$
\reg(M)=\max\{s-i\mid \beta_{i,s}\neq0,\ 0\leq i\leq d'\}.
$$
Set
$\beta^Q_i(M)=\sum_{s\in\mathbb Z}\beta_{i,s},$ the $i$th total Betti number of $M$. In Hypothesis $\ref{hypo}$, since $A$ is a graded complete intersection, $\reg(A)=\sum_{j=1}^{c}(w_j-1).$
\end{point}

 We next recall the construction of Eisenbud operators in the graded setting. Throughout this section, we work under the hypotheses stated in \ref{hypo}.

\begin{point}{Eisenbud operators}
\normalfont 

Let
$$
\mathbb F:\quad
\cdots\longrightarrow F_{i+1}
\xrightarrow{\partial_{i+1}}
F_i
\xrightarrow{\partial_i}
F_{i-1}
\longrightarrow\cdots\longrightarrow
F_0\longrightarrow M\longrightarrow0
$$
be a graded free resolution of $M$ over $A$. Set $\F_i$ to be the $i$th free module in the complex $\F.$ For $1\leq j\leq c$, let $\mathbb F^{(j)}$ denote the complex obtained from $\mathbb F$ by shifting the internal degree by $-w_j$; thus $\mathbb F^{(j)}_i=F_i(-w_j)$ for all $i$. We also denote by
$\mathbb F(+2)$ the complex obtained by shifting the homological degree by two,
namely $\mathbb F(+2)_i=F_{i+2}.$

The Eisenbud operators associated to the regular sequence
$\mathbf f=f_1,\ldots,f_c$ are constructed as follows. Choose graded free
$Q$-modules $\widetilde F_i$ and homogeneous $Q$-linear maps
$\widetilde{\partial}_i:
\widetilde F_i\longrightarrow\widetilde F_{i-1}$
lifting the differentials of $\mathbb F$. Thus
$\widetilde F_i\otimes_QA\cong F_i$ and
$\widetilde{\partial}_i\otimes_QA=\partial_i$. Since $\partial^2=0$ over $A$, we have $ \widetilde{\partial}^{\,2}\equiv0
\pmod{(f_1,\ldots,f_c)}.$
Consequently, there exist homogeneous $Q$-linear maps
$\displaystyle 
\widetilde t_{j,i}:
\widetilde F_{i+2}\longrightarrow\widetilde F_i(-w_j),
\text{ for all } 1\leq j\leq c,
$
such that
$$
\widetilde{\partial}^{\,2}
=\sum_{j=1}^cf_j\widetilde t_j.
$$
Tensoring with $A$, we obtain homogeneous chain maps
$$
t'_j=t'_j(Q,\mathbf f,\mathbb F):
\mathbb F(+2)\longrightarrow\mathbb F^{(j)},
\text{ for all } 1\leq j\leq c.
$$
In homological degree $i$, this gives a homogeneous map
$t'_{j,i}:F_{i+2}\longrightarrow F_i(-w_j).$ The maps $t'_1,\ldots,t'_c$ are called the \emph{Eisenbud operators}
associated to $\mathbf f$. They are uniquely determined up to homotopy; see \cite[Corollary 1.4]{Eisenbud}. Moreover, the chain maps $t'_j$, $1\leq j\leq c$, commute up to homotopy; see \cite[Corollary 1.5]{Eisenbud}. Therefore, the maps
$$
\Hom_A(t'_j,N):
\Hom_A(\mathbb F^{(j)},N)
\longrightarrow
\Hom_A(\mathbb F(+2),N)
$$
induce well-defined homogeneous $A$-module homomorphisms
$$
t_j:\Ext_A^i(M,N)
\longrightarrow
\Ext_A^{i+2}(M,N)(-w_j)
$$
for all $i\geq0$ and $1\leq j\leq c$. Thus, $\displaystyle \Ext_A^*(M,N):=
\bigoplus_{i\geq0}\Ext_A^i(M,N)$ becomes a graded $R:=A[t_1,\ldots,t_c]$-module, where $R$ is the graded polynomial ring over $A$ in the
\emph{cohomology operators} $t_j$, with
$\deg(t_j)=2$ for $1\leq j\leq c$. These structures depend only on $\mathbf f$, are natural in both module arguments, and commute with the connecting maps induced by short exact sequences.
\end{point}

\begin{point}\label{k}
\normalfont
Since $Q$ is a polynomial ring over a field,
$\operatorname{projdim}_Q(M)<\infty$. Hence $\Ext_Q^i(M,N)=0$ for all $i\gg0$. Therefore, by \cite[Theorem 3.1]{Gulliksen},
$\Ext_A^*(M,N)$ is finitely generated over
$A[t_1,\ldots,t_c]$.

Let $\m=\n/(\mathbf f)$
be the unique homogeneous maximal ideal of $A$. Then $k=A/\m\cong Q/\n.$ Since
$\m\Ext_A^i(M,k)=0$
for all $i\geq0$, it follows that
$\Ext_A^*(M,k)$ is a finitely generated module over $S:=k[t_1,\ldots,t_c].$ Moreover,
$\dim_{\mathcal S}\Ext_A^*(M,k)=\cx_A(M).$
Recall that the complexity of $M$ is defined by
$$
\cx_A(M)
=
\inf\left\{
b\in\mathbb Z_{\geq0}\ \middle|\
\beta_n^A(M)\leq Cn^{b-1}
\text{ for some }C>0\text{ and all }n\gg0
\right\}.
$$
By \ref{hypo} and \cite[Theorem 9.2.1]{avramov1998infinite}, $\cx_A(M)\geq 1.$
\end{point}

\begin{point}{Bigraded structures}\label{str}
\normalfont

We make $R=A[t_1,\ldots,t_c]$ a $\mathbb{Z}^2$-graded ring as follows. Write
$A=A_0[x_1,\ldots,x_n]$, where $x_i$ denotes the image of $X_i$ in $A$ and $\deg(x_i)=1$ for $1\leq i\leq n$. Note that $A_0=Q_0=k$.
Thus, $R=k[x_1,\ldots,x_n,t_1,\ldots,t_c].$
Set $\deg(x_i)=(0,1)
\text{ for all }1\leq i\leq n,$ and $\deg(t_j)=(2,-w_j)
\text{ for all }1\leq j\leq c.$ We give $E^*:=\Ext_A^*(M,N)$
a $\mathbb{Z}^2$-graded structure by defining its $(i,a)$th graded component
to be $$E^*_{(i,a)}=\Ext_A^i(M,N)_a
\text{ for } (i,a)\in\mathbb{Z}^2.$$

Set $\displaystyle \Ext_A^{\mathrm{even}}(M,N) =\bigoplus_{i\in\mathbb{Z}}\Ext_A^{2i}(M,N)$
and $\displaystyle \Ext_A^{\mathrm{odd}}(M,N)=
\bigoplus_{i\in\mathbb{Z}}\Ext_A^{2i+1}(M,N).$ Following \cite[Section 2.6]{irregularity}, set
$\mathcal S:=k[X_1,\ldots,X_n,Y_1,\ldots,Y_c],$
where $\deg(X_i)=(0,1)
\text{ for all }1\leq i\leq n,$
and $\deg(Y_j)=(1,-w_j)
\text{ for all }1\leq j\leq c.$ Then $\displaystyle \Ext_A^{\mathrm{even}}(M,N)$ and
$\displaystyle \Ext_A^{\mathrm{odd}}(M,N)$ are canonically
$\mathbb{Z}^2$-graded $\mathcal S$-modules. For instance, the $(i,a)$th graded
component of $\Ext_A^{\mathrm{even}}(M,N)$ is defined by
$$
\Ext_A^{\mathrm{even}}(M,N)_{(i,a)}
=
\Ext_A^{2i}(M,N)_a
$$
for $(i,a)\in\mathbb{Z}^2$. The actions of
$X_1,\ldots,X_n,Y_1,\ldots,Y_c$ are defined by the actions of
$x_1,\ldots,x_n,t_1,\ldots,t_c$, respectively.

In particular,
$Y_j\cdot\Ext_A^{2i}(M,N)_a
\subseteq
\Ext_A^{2(i+1)}(M,N)_{a-w_j}$
for $i,a\in\mathbb{Z}$ and $1\leq j\leq c$.
By Proposition 2.7 of \cite{irregularity},
$\Ext_A^{\mathrm{even}}(M,N)$ and $\Ext_A^{\mathrm{odd}}(M,N)$ are finitely generated
$\mathbb{Z}^2$-graded modules over $\mathcal S=k[X_1,\ldots,X_n,Y_1,\ldots,Y_c].$
\end{point}

We complete this section by proving the key result of this paper, which is the bigraded analogue of Proposition 2.13 of \cite{asymptotic}. The proof is similar to the local case; however, we include the details for clarity and to keep track of the internal degree shifts arising from the bigrading.

\begin{proposition}
\label{cxM}
Let $\deg(f_j)=w$ for all
$1\leq j\leq c$. Let $M$ be a graded maximal Cohen-Macaulay $A$-module of
complexity at least one. Let $R=A[t_1,\ldots,t_c]$ be as in \ref{str}. Then there
exist $\alpha=a_1t_1+\cdots+a_ct_c\in k[t_1,\ldots,t_c]\subseteq R,
\, a_j\in k,$ of bidegree $(2,-w)$ and an integer $n_0\in\mathbb N$ such that
$\alpha$ induces homogeneous surjective maps $M_{i+2}\xrightarrow{\alpha_{i,M}}M_i(-w)$
for all $i\geq n_0$. Moreover, the following properties hold:
\begin{enumerate}
    \item[{(a)}]
    The maps
    $\Tor_j^A(\alpha_{i,M},k):
    \Tor_j^A(M_{i+2},k)
    \longrightarrow
    \Tor_j^A(M_{i},k)(-w)$
    are surjective for all $j\geq0$ and $i\geq n_0$.

    \item[{(b)}]
    Set $K_i=\ker(\alpha_{i,M})$ for all $i\geq n_0$. Then there is a short exact sequence
    $0\longrightarrow K_i\longrightarrow M_{i+2}
    \xrightarrow{\alpha_{i,M}}M_i(-w)\longrightarrow0$ with $\cx_A(K_i)=\cx_A(M)-1$ for all $i\geq n_0$.

    \item[{(c)}]  $K_{i+1}\cong\Syz_1^A(K_i)$
    for all $i\geq n_0$.
\end{enumerate}
\end{proposition}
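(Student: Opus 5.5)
The plan is to follow the proof of \cite[Proposition 2.13]{asymptotic} and keep track of the internal degrees throughout. Since every $t_j$ has bidegree $(2,-w)$, any $k$-linear combination $\alpha=\sum a_jt_j$ is again homogeneous of bidegree $(2,-w)$.

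\textbf{Step 1: choice of $\alpha$.} By \ref{k}, $E:=\Ext_A^*(M,k)$ is a finitely generated graded module over $S=k[t_1,\ldots,t_c]$ with $\dim_S E=\cx_A(M)\geq1$. Because $k$ is infinite, prime avoidance in the graded setting gives a linear form $\alpha=\sum a_jt_j$ that avoids every associated prime of $E$ except possibly the irrelevant ideal $(t_1,\ldots,t_c)$. It should also avoid every minimal prime of $E$ of dimension $\cx_A(M)$, that is, $\alpha$ is a parameter element. For this choice, multiplication by $\alpha$ on $E$ is injective in high cohomological degrees, so
\[
\alpha:\Ext^{i}_A(M,k)\to\Ext^{i+2}_A(M,k)(-w)
\]
is injective for $i\geq n_0$. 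In addition, $\dim_S E/\alpha E=\cx_A(M)-1$.

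\textbf{Step 2: construct $\alpha_{i,M}$ and prove (a) and surjectivity.} Set $t'=\sum a_jt'_j:\mathbb F(+2)\to\mathbb F^{(w)}$. This is a homogeneous chain map; it lifts $\alpha$ and is defined up to homotopy. In homological degree $i$ it restricts to a homogeneous map on syzygies $\alpha_{i,M}:M_{i+2}=\operatorname{im}\partial_{i+2}\to\operatorname{im}\partial_i(-w)=M_i(-w)$. Since $\mathbb F$ is minimal, $\Hom_A(\mathbb F,k)$ has zero differentials. Hence $\Ext^i_A(M,k)=\Hom_A(F_i,k)$, and $\alpha$ acts there as $\Hom(t'_i,k)$. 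Dualizing over $k$, injectivity of $\alpha$ on Ext in degrees $\geq n_0$ is the same as surjectivity of $t'_i\otimes k:F_{i+2}\otimes k\to F_i(-w)\otimes k$. By Nakayama, $t'_i$ is then surjective for $i\geq n_0$, and so is the induced map on syzygies $\alpha_{i,M}$.

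The truncations of $\mathbb F$ at $M_{i+2}$ and at $M_i$ are minimal resolutions of these syzygies, and $t'$ restricted to them is a lift of $\alpha_{i,M}$. Therefore $\Tor_j^A(\alpha_{i,M},k)$ is identified with $t'_{i+j}\otimes k$, and this map is surjective for all $j\geq0$ once $i\geq n_0$. This proves (a).

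\textbf{Step 3: (b) and (c).} Set $K_i=\ker\alpha_{i,M}$. Applying $\Tor^A_*(-,k)$ to $0\to K_i\to M_{i+2}\to M_i(-w)\to0$, the surjectivity in (a) splits the long exact sequence into short exact sequences
\[
0\to\Tor_j(K_i,k)\to\Tor_j(M_{i+2},k)\to\Tor_j(M_i,k)(-w)\to0 .
\]
Consequently $\beta_j(K_i)=\beta_{i+j+2}(M)-\beta_{i+j}(M)$. Dually, $\Ext^*(K_i,k)$ corresponds to the cokernel of $\alpha$ on the shifted tail of $E$, up to degrees below $n_0$. Since $\dim_S E/\alpha E=\cx_A(M)-1$, we get $\cx_A(K_i)=\cx_A(M)-1$.

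For (c), the kernels $\ker t'_{i+j}=:G_j$ of the surjections of free modules are free, since a surjection of graded free modules splits. They form a subcomplex $\mathbb G$, and it is exact by the short exact sequences of complexes $0\to\mathbb G\to\mathbb F_{\geq i+2}\to\mathbb F_{\geq i}(-w)\to0$ together with the long exact homology sequence. Thus $\mathbb G$ is a free resolution of $K_i$, and it is minimal because $\mathbb G\otimes k$ injects into $\mathbb F\otimes k$, which has zero differentials. Comparing $\mathbb G$ with the next step shows that the first syzygy of $K_i$ is $\ker(M_{i+3}\to M_{i+1}(-w))=K_{i+1}$.

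\textbf{Main obstacle.} The delicate step is Step 1. One must pick $\alpha$ so that injectivity holds in all high degrees and the dimension drops by exactly one. The natural choice is a linear form outside every nonirrelevant associated prime of $E$; such a form lies outside all minimal primes of maximal dimension, which gives exactly this drop. Working over the infinite field $k$ lets graded prime avoidance produce such a degree-one form.
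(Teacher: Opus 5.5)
Your proposal is correct and follows the paper's proof essentially step for step: you choose $\alpha$ so that $(0:_E\alpha)$ has finite length, dualize and apply Nakayama to make each $t'_i$ surjective, split the $\Tor$ long exact sequence, and show the kernel subcomplex is a minimal resolution of $K_i$ with $K_{i+1}\cong\Syz_1^A(K_i)$. The only variation is in (b): you obtain $\cx_A(K_i)=\cx_A(M)-1$ from $\beta_j(K_i)=\dim_k(E/\alpha E)_{i+j+2}$ together with $\dim_S E/\alpha E=\dim_S E-1$ for a parameter $\alpha$, whereas the paper uses the eventual polynomial growth of $\beta^A_{2i+l}(M)$; both arguments are valid.
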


\begin{proof}
Set $E=\Ext_A^*(M,k).$ By \ref{k}, $E$ is a finitely generated
$S=k[t_1,\ldots,t_c]$-module. Recall that
$\deg(t_j)=(2,-w)$ for all $1\leq j\leq c$. If we forget the internal grading and consider $\deg(t_i)=2$ for all $1\leq i\leq c$, then $S$ is a graded polynomial ring over the infinite field $k$.
Hence, we may choose a homogeneous linear form $\alpha=a_1t_1+\cdots+a_ct_c,
\, a_j\in k,$ such that $(0:_E\alpha)$ has finite length as an $S$-module.

Since $\deg(t_j)=(2,-w)$ for all $1\leq j\leq c$, the element
$\alpha$ is bihomogeneous of bidegree $(2,-w)$. As
$(0:_E\alpha)$ has finite length, there exists $n_0\in\mathbb N$ such that $(0:_E\alpha)_{(i,a)}=0$ for all $i\geq n_0$ and every $a\in\mathbb Z$. Consequently,
multiplication by $\alpha$ induces injective maps
\begin{equation}
\label{g}
\Ext_A^i(M,k)\xrightarrow{\alpha}\Ext_A^{i+2}(M,k)(-w)
\end{equation}
for all $i\geq n_0$. Let $\mathbb F$ be the minimal graded free resolution of $M$ over
$A$. The element $\alpha$ corresponds to the chain map
$$
\alpha'=a_1t'_1+\cdots+a_ct'_c:
\mathbb F(+2)\longrightarrow\mathbb F(-w).
$$
Denote its $i$th component by $\alpha'_i:F_{i+2}\longrightarrow F_i(-w).$
The multiplication map in \eqref{g} is induced in cohomology by $\Hom_A(\alpha',k)$.
Since $\mathbb F$ is minimal, $\Ext_A^i(M,k)\cong\Hom_A(F_i,k)$
for all $i\geq0$. Moreover, $
\Hom_A(F_i(-w),k)
\cong\Hom_A(F_i,k)(w).$
Hence \eqref{g} implies that
$$
\Hom_A(\alpha'_i,k):
\Hom_A(F_i(-w),k)
\longrightarrow
\Hom_A(F_{i+2},k)
$$
is injective for all $i\geq n_0$. Taking graded $k$-duals, we obtain
surjective maps
$$
\Hom_A(F_{i+2},k)^\vee
\longrightarrow
\Hom_A(F_i(-w),k)^\vee.
$$
Using the natural isomorphism
$\Hom_k(F_i\otimes_Ak,k)
\cong\Hom_A(F_i,k),$
we obtain surjective maps
\begin{equation}
\label{free-surj}
\alpha'_i\otimes_Ak:
F_{i+2}\otimes_Ak
\longrightarrow
F_i(-w)\otimes_Ak
\end{equation}
for all $i\geq n_0$. By the Nakayama lemma,
$\alpha'_i:F_{i+2}\longrightarrow F_i(-w)$
is surjective for all $i\geq n_0$. Since $\alpha'$ is a chain map, it induces surjective homogeneous maps $\alpha_{i,M}:M_{i+2}\longrightarrow M_i(-w)$ for all $i\geq n_0$. 
By \eqref{free-surj}, for all $i\geq n_0$ and $j\geq0$, the map
$$
\Tor_j^A(\alpha_{i,M},k):
\Tor_j^A(M_{i+2},k)
\longrightarrow
\Tor_j^A(M_i,k)(-w)
$$
is identified with the map induced by $\alpha'_{i+j}\otimes_A k,$ and is therefore surjective. This proves part (a).

For all $i\geq n_0$, set $K_i=\ker(\alpha_{i,M}).$
Then
\begin{equation}
\label{Kseq}
0\longrightarrow K_i\longrightarrow M_{i+2}
\xrightarrow{\alpha_{i,M}}M_i(-w)\longrightarrow0
\end{equation}
is exact. Since the induced maps on Tor are surjective, the long exact
sequence of Tor associated to \eqref{Kseq} yields
$$
0\longrightarrow
\Tor_j^A(K_i,k)
\longrightarrow
\Tor_j^A(M_{i+2},k)
\longrightarrow
\Tor_j^A(M_i,k)(-w)
\longrightarrow0
$$
for all $j\geq0$. Hence $\beta_j^A(K_i)=\beta_j^A(M_{i+2})-\beta_j^A(M_i)=
\beta_{j+i+2}^A(M)-\beta_{j+i}^A(M).$ Since $A$ is a complete intersection, for each $l\in \{0,1\}$, the functions $i\longmapsto \beta^A_{2i+l}(M)$ are eventually polynomial of degree $\cx_A(M)-1$ with the same leading coefficients. Therefore, 
$\cx_A(K_i)=\cx_A(M)-1$
for all $i\geq n_0$. This proves part $(b)$ (see \cite[Theorem 9.2.1]{avramov1998infinite}).

For all $i\geq n_0$, set $\overline F_i=\ker\left(
\alpha'_i:F_{i+2}\longrightarrow F_i(-w)
\right).$ Since $\alpha'_i$ is surjective and $F_i(-w)$ is free, the sequence
$$
0\longrightarrow\overline F_i
\longrightarrow F_{i+2}
\xrightarrow{\alpha'_i}
F_i(-w)
\longrightarrow0
$$
is split exact. Hence $\overline F_i$ is a graded free $A$-module. The differentials of $\mathbb F$ restrict to maps $\overline\partial_{i+1}:
\overline F_{i+1}\longrightarrow\overline F_i.$
Applying the Snake Lemma to the following commutative diagram of split exact sequences 
\begin{equation*}
    \begin{tikzcd}
   0 \arrow{r}{} & \overline{{F}}_{i+1} \arrow{r}{} \arrow{d}{\overline{\partial_{i+1}}}  &  {F}_{i+3} \arrow{r}{\alpha_{i+1,M}} \arrow{d}{\partial_{i+3}} & {F}_{i+1} \arrow{r}{} \arrow{d}{\partial_{i+1}} & 0 \\  0 \arrow{r}{} & \overline{{F}}_{i} \arrow{r}{} &
        {F}_{i+2} \arrow{r}{\alpha_{i,M}} & {F}_{i} \arrow{r}{} & 0,
    \end{tikzcd}
\end{equation*} 
we obtain $K_i\cong
\operatorname{coker}(
\overline F_{i+1}
\xrightarrow{\overline\partial_{i+1}}
\overline F_i)$
for all $i\geq n_0$. Thus the left-hand column forms a graded free resolution of $K_{n_0}$. Since $\mathbb F$ is minimal, $\partial_{i+3}(F_{i+3}) \subseteq\m F_{i+2}.$ As $\overline F_i$ is a graded direct summand of $F_{i+2}$, we have $\m F_{i+2}\cap\overline F_i=\m\overline F_i.$ Therefore,
$\overline\partial_{i+1}(\overline F_{i+1})
\subseteq\m\overline F_i,$
and hence the above graded free resolution is minimal. Consequently, its successive syzygies satisfy $K_{i+1}\cong\Syz_1^A(K_i)$
for all $i\geq n_0$. This proves part $(c)$.
\end{proof}

Note that in the case of complexity one, for all $i\gg0$, $\cx_A(K_i)=0$ and hence $\projdim_A(K_i)<\infty$. Since $K_i$ is maximal Cohen-Macaulay, it is grade free. Therefore, $K_{i+1}\cong \Syz^A_1(K_i)=0$ for all $i\gg0.$  Thus, $M_{i+2}\cong M_i(-w)$ for all $i\gg0.$

\section{Linearity of regularity}
In this section, under the hypotheses of \ref{hypo}, we study the asymptotic behavior of the Castelnuovo-Mumford regularity of the free modules $F_i$ and the syzygies $M_i$. We first prove that $\reg(F_i)$ eventually agrees with a linear polynomial. We then obtain linear lower and upper bounds for $\reg(M_i)$, and, under an additional suitable hypothesis, show that $\reg(M_i)$ is eventually linear.
\begin{point}
    \normalfont
Let $A$ be a standard $\N$-graded Noetherian $*$-local ring and $M$ be a finitely generated  $\Z$-graded $A$-module. Let
\[
\mathbb F:\quad
\cdots\longrightarrow F_i
\xrightarrow{\partial_i}
F_{i-1}
\longrightarrow\cdots\longrightarrow
F_0\longrightarrow M\longrightarrow0
\]
be the minimal graded free resolution of $M$ over $A$, where $\displaystyle F_i=\bigoplus_{s\in\mathbb{Z}}A(-s)^{\beta_{i,s}}$. Define $\displaystyle \alpha_i(M)=\operatorname{max}\{ s\in \Z\, |\, \beta_{i,s}\neq 0\}$  and $\displaystyle \gamma_i(M)= \operatorname{min}\{ s\in \Z\,|\, \beta_{i,s}\neq 0 \}.$
Note that $\alpha_i(M)$ and $\gamma_i(M)$ are the largest and smallest shifts of the $i$th free module in the graded minimal free resolution of $M.$ We say that $M$ has a \emph{pure resolution} if $\alpha_i(M)=\gamma_i(M)$ for all $i\geq 0$. 
Since $\mathbb F$ is minimal, we have $$\displaystyle \Ext_A^i(M,k)\cong\Hom_A(F_i,k)\cong
\bigoplus_{s\in\mathbb{Z}}k(s)^{\beta_{i,s}}$$ and $$\Tor^A_i(M,k)\cong F_i\otimes_A k\cong \bigoplus_{s\in\mathbb{Z}}k(-s)^{\beta_{i,s}}
$$ as graded $k$-vector spaces, for all $i\geq0$.  It follows that
$\text{end}(\Tor^A_i(M,k))=\alpha_i(M)$
and $\text{end}(\Ext_A^i(M,k))=-\gamma_i(M).$
\end{point}

\begin{proposition}\label{shift}
 Assume the same hypothesis as in \ref{hypo}. Then, for each $l\in\{0,1\}$, there exist $a_l,a'_l\in \{w_1,\ldots ,w_c\}$ and  $e_l,e'_l\in \Z$ such that $\alpha_{2i+l}(M)=a_l.i+e_l$
and $\gamma_{2i+l}(M)=a'_l.i+e'_l$ for all $i\gg0$.
\end{proposition}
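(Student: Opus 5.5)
We use the bigraded module structure of \ref{str} with $N=k$. Set $E=\Ext_A^{\mathrm{even}}(M,k)$ (for $l=0$) or $\Ext_A^{\mathrm{odd}}(M,k)$ (for $l=1$). By \ref{k} and \ref{str}, $E$ is a finitely generated $\mathbb Z^2$-graded module over $\mathcal S=k[X_1,\ldots,X_n,Y_1,\ldots,Y_c]$, with $\deg X_i=(0,1)$ and $\deg Y_j=(1,-w_j)$. The $X_i$ act by zero because $\m$ annihilates $\Ext_A^*(M,k)$. Hence $E$ is a finitely generated bigraded module over $T:=k[Y_1,\ldots,Y_c]$. By the preceding paragraph, $E_{(i,a)}=\Ext_A^{2i+l}(M,k)_a\neq0$ if and only if $\beta_{2i+l,-a}\neq 0$. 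Therefore
$$-\gamma_{2i+l}(M)=\max\{a: E_{(i,a)}\neq0\},\qquad -\alpha_{2i+l}(M)=\min\{a:E_{(i,a)}\neq 0\}.$$
So it suffices to prove that, for a finitely generated $\mathbb Z^2$-graded $T$-module $E$ with $E_i\neq0$ for all $i\gg0$, the functions $i\mapsto\max\{a:E_{(i,a)}\ne0\}$ and $i\mapsto \min\{a:E_{(i,a)}\neq0\}$ are eventually of the form $-w\,i+e$ with $w\in\{w_1,\ldots,w_c\}$. The hypothesis $E_i\neq 0$ for $i\gg0$ holds because $\projdim_A M=\infty$, so every Betti number is nonzero.

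We reduce to the case of a single degree. Group the variables by degree: let $u_1,\ldots,u_r$ be the distinct values among the $w_j$. We induct on $r$, following the argument of \cite[Section 2.6]{irregularity} and the known linearity results for bigraded modules. Take a finite bigraded filtration $0=E^0\subset E^1\subset\cdots\subset E^m=E$ with $E^p/E^{p-1}\cong (T/\mathfrak P_p)(-\mathbf d_p)$ for bihomogeneous primes $\mathfrak P_p$. The support sets $\{(i,a):E_{(i,a)}\neq0\}$ satisfy
$$\operatorname{supp}(E)=\bigcup_p \operatorname{supp}(E^p/E^{p-1}),$$
because $k$-dimensions add over the filtration. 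Hence $\max$ and $\min$ over $a$ of $E$ in degree $i$ are the max and min over $p$ of the corresponding quantities for the cyclic pieces, where a piece contributes only if it is nonzero in degree $i$. The pieces that are eventually zero can be discarded. The maximum of finitely many eventually linear functions is eventually linear, with leading coefficient equal to one of the slopes. So we only need to treat $T/\mathfrak P$ shifted.

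A bihomogeneous prime $\mathfrak P$ of $T$ contains the relevant information. Write $T/\mathfrak P$ as a domain generated by the images $\bar Y_j\notin\mathfrak P$, and let $J=\{j:Y_j\notin\mathfrak P\}$. If $J=\emptyset$, the module is concentrated in one degree and is discarded. If $J\neq\emptyset$, the monomial $\prod_{j\in J}\bar Y_j^{e_j}$ is nonzero in the domain, so every monomial in $\{\bar Y_j\}_{j\in J}$ of total degree $i$ is nonzero. The degree-$i$ component is spanned by these monomials, and each has internal degree $-\sum e_jw_j$. Thus the max internal degree in degree $i$ is $-i\min_{j\in J}w_j$, and the min is $-i\max_{j\in J}w_j$, both attained exactly, since the pure powers $\bar Y_j^i\neq0$. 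After the shift by $\mathbf d_p$, these are linear in $i$ for all large $i$ with slopes in $\{-w_1,\ldots,-w_c\}$. Taking max and min over $p$ gives $\gamma_{2i+l}(M)=a'_l i+e'_l$ and $\alpha_{2i+l}(M)=a_l i+e_l$ with $a_l,a'_l\in\{w_1,\ldots,w_c\}$. The case $l=1$ is identical.

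The main obstacle is the filtration step. We need a prime filtration compatible with the $\mathbb Z^2$-grading; this is standard for multigraded modules, since associated primes of multigraded modules are multigraded. We also need the support-union claim, which holds componentwise by exactness of the filtration on each $(i,a)$-component. The observation that pure powers $\bar Y_j^i$ are nonzero in a domain, which makes the extremes attained exactly, is what makes the linearity exact rather than merely bounded.
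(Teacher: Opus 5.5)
Your argument is correct, but it takes a different route from the paper.

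\textbf{The paper's route.} The paper's proof is a short reduction. Since $\Tor_i^A(M,k)$ and $\Ext_A^i(M,k)$ have finite length, their regularities equal their ends, namely $\alpha_i(M)$ and $-\gamma_i(M)$. Eventual linearity with slopes in $\{w_1,\ldots,w_c\}$ is then quoted from Theorem A of \cite{irregularity}, applied with $N=k$. It uses $\Tor$ to control $\alpha$ and $\Ext$ to control $\gamma$.

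\textbf{Your route.} You prove the needed special case directly. Since $\m$ kills $\Ext_A^*(M,k)$, the even and odd parts are finitely generated bigraded modules over $k[Y_1,\ldots,Y_c]$. A bigraded prime filtration then reduces everything to cyclic pieces $(k[Y_1,\ldots,Y_c]/\mathfrak P)(-\mathbf d)$. The row-$i$ internal support of such a piece is exactly $\{-\sum_{j\in J}e_jw_j : |e|=i\}$. Its extremes are therefore exactly linear, with slopes $-\min_{j\in J}w_j$ and $-\max_{j\in J}w_j$.

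\textbf{What each approach buys.} Your version has three advantages.
\begin{enumerate}
\item It is self-contained. It needs only the finite generation recorded in \ref{k} and \ref{str}, plus the existence of $\mathbb Z^2$-graded prime filtrations, which holds because $\mathbb Z^2$ is torsion-free.
\item It reads off both $\alpha_{2i+l}(M)$ and $\gamma_{2i+l}(M)$ from the single module $\Ext_A^*(M,k)$, as the minimum and maximum of its internal support.
\item It identifies the slopes explicitly: $a_l=\max_p\max_{j\in J_p}w_j$ and $a'_l=\min_p\min_{j\in J_p}w_j$, over the filtration primes with $J_p\neq\varnothing$.
\end{enumerate}
The paper's version buys brevity and a direct connection to the general regularity results of \cite{irregularity}.

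\textbf{A minor point.} Your sentence announcing an induction on the number $r$ of distinct degrees is vestigial. Your filtration argument never uses it, so it can simply be deleted.
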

\begin{proof}
    Note that, for all $i\geq 0,$ $\Tor^A_i(M,k)$ and $\Ext_A^i(M,k)$ are finitely generated graded $A$-modules of dimension zero. It follows that, for all $i\geq 0$, $$\reg(\Tor^A_i(M,k))=\text{end}(\Tor^A_i(M,k))=\alpha_i(M)$$ and $$\reg(\Ext_A^i(M,k))=\text{end}(\Ext_A^i(M,k))=-\gamma_i(M).$$ By Theorem A of \cite{irregularity}, for all $l\in \{0,1\}$ there exist $a_l,a'_l\in \{w_1,\ldots ,w_c\}$ and $e_l,e'_l\in \Z $ such that $\reg(\Ext_A^{2i+l}(M,k))=-a'_l.i+e'_l$ and $\reg(\Tor^A_{2i+l}(M,k))=a_l.i+e_l$ for all $i\gg 0.$ This implies that $\gamma_{2i+l}(M)=a'_l.i-e'_l$ and $\alpha_{2i+l}(M)=a_l.i+e_l$ for all $i\gg 0.$ This proves the result.
\end{proof}

\begin{corollary}\label{shiftcoro}
   Assume the same hypothesis as in \ref{hypo}. Then, for each $l\in\{0,1\}$, there exist integers $a_l\in \{w_1,\ldots ,w_c\}$ and $e_l\in \Z$ such that $\reg(F_{2i+l})=a_l.i+e_l$ for all $i\gg0$.
\end{corollary}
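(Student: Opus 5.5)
The corollary should follow from Proposition \ref{shift} once we compute the regularity of a graded free $A$-module. The plan is to show first that for a finitely generated graded free module $F=\bigoplus_{s}A(-s)^{\beta_s}$ one has
$$\reg(F)=\max\{s\mid \beta_s\neq0\}+\reg(A).$$
Then we apply this to $F=F_{2i+l}$, which gives $\reg(F_{2i+l})=\alpha_{2i+l}(M)+\reg(A)$.

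For the formula, we use the direct sum statement in Lemma \ref{reg}. It gives $\reg(F)=\max_s\{\reg(A(-s))\mid \beta_s\neq 0\}$.

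Next we need $\reg(A(-s))=\reg(A)+s$. Local cohomology commutes with degree shifts, so $H^j_{A_+}(A(-s))=H^j_{A_+}(A)(-s)$. Hence $a_j(A(-s))=a_j(A)+s$ for every $j$, and taking the maximum of $a_j+j$ gives the claim. Here $A_+=\m$, since $A$ is standard graded over $A_0=k$.

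Since $A\neq 0$, $\reg(A)$ is a finite integer. Indeed, \ref{regpoly} gives $\reg(A)=\sum_{j=1}^c(w_j-1)$, although only its finiteness matters here. Therefore $\reg(F)=\max\{s\mid\beta_s\neq 0\}+\reg(A)$. For $F=F_i$ the maximum is exactly $\alpha_i(M)$, which is defined because $F_i\neq 0$ for all $i$; this holds since $\projdim_A(M)=\infty$.

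Finally, Proposition \ref{shift} gives $a_l\in\{w_1,\ldots,w_c\}$ and $e_l\in\Z$ with $\alpha_{2i+l}(M)=a_l i+e_l$ for $i\gg0$. Hence
$$\reg(F_{2i+l})=a_l i+\bigl(e_l+\reg(A)\bigr)\quad\text{for all } i\gg 0.$$
Renaming the constant $e_l+\reg(A)$ as $e_l$ gives the statement. There is no real obstacle; the only point needing care is the shift compatibility of local cohomology, which is standard.
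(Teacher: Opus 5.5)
Your proposal is correct and is essentially the paper's argument: the paper also combines the direct-sum case of Lemma \ref{reg} with $\reg(A(-s))=\reg(A)+s$ to get $\reg(F_i)=\reg(A)+\alpha_i(M)$, then invokes Proposition \ref{shift} and absorbs $\reg(A)$ into the constant. Your extra remarks (shift-compatibility of local cohomology, finiteness of $\reg(A)$, and $F_i\neq 0$ because $\projdim_A(M)=\infty$) only make explicit details that the paper leaves implicit.
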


\begin{proof}
    Note that $\reg(A(-s))=\reg(A)+s$. Let \[
\mathbb F:\quad
\cdots\longrightarrow F_i
\xrightarrow{\partial_i}
F_{i-1}
\longrightarrow\cdots\longrightarrow
F_0\longrightarrow M\longrightarrow0
\]
be the minimal graded free resolution of $M$ over $A$, where $\displaystyle F_i=\bigoplus_{s\in\mathbb{Z}}A(-s)^{\beta_{i,s}}$. By Lemma \ref{reg}, $\reg(F_i)=\max\limits_{s}\{\reg(A(-s))\}=\reg(A)+\alpha_i(M).$ The result now follows from Proposition \ref{shift}. 
\end{proof}
\begin{remark}
   Note that Proposition \ref{shift} and Corollary \ref{shiftcoro} do not require $Q$ to be a polynomial ring over a field $k$; it suffices that $Q$ is a standard $\N$-graded $*$-local ring with $\text{projdim}_Q(M)<\infty$ (see \cite[Theorem A]{irregularity}).
\end{remark}

In the following results, we study the behavior of $\reg(M_i)$ for all $i\gg0$ under the hypothesis of \ref{hypo}.

\begin{theorem}\label{linearbound}
Assume the same hypothesis as in \ref{hypo}. Let $M$ be a graded maximal Cohen-Macaulay $A$-module. Then, for each $l\in\{0,1\}$, the function $i\longmapsto \reg(M_{2i+l})$ is eventually bounded above and below by linear polynomials.
\end{theorem}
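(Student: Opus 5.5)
We will bound $\reg(M_i)$ in terms of the degree shifts of the free modules $F_i$ and $F_{i-1}$ in the minimal resolution, and then use the linearity of those shifts (Proposition \ref{shift} and Corollary \ref{shiftcoro}).

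\emph{Upper bound.} For $i\geq1$ consider the short exact sequence
\[
0\longrightarrow M_{i+1}\longrightarrow F_i\longrightarrow M_i\longrightarrow0.
\]
Lemma \ref{reg}(iii) gives
\[
\reg(M_i)\leq\max\{\reg(M_{i+1})-1,\reg(F_i)\}.
\]
Iterating this bound in the wrong direction gets out of control, so we instead argue via depth. Since $M$ is maximal Cohen--Macaulay, every $M_i$ is maximal Cohen--Macaulay. Hence $H^j_{\m}(M_i)=0$ for $j<d$, and so $\reg(M_i)=a_d(M_i)+d$. The local cohomology sequence of the exact sequence above (all three modules are MCM) yields
\[
0\to H^d_\m(M_{i+1})\to H^d_\m(F_i)\to H^d_\m(M_i)\to0,
\]
which is exact because $H^{d-1}_\m(M_i)=0$ and $H^{d+1}=0$. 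Therefore
\[
\reg(M_i)\leq\reg(F_i)=\reg(A)+\alpha_i(M).
\]
By Proposition \ref{shift}, for $i=2i'+l$ this is eventually the linear function $a_l i'+e_l+\reg(A)$.

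\emph{Lower bound.} Since $M_i$ is generated in degrees $\gamma_i(M),\ldots,\alpha_i(M)$, its top generator degree satisfies $\reg(M_i)\geq\alpha_i(M)$. This is the standard fact $\reg(M)\geq\operatorname{end}(M/\m M)$ over a standard graded ring; it follows, for example, from the surjection $H^d_\m(F_i)\to H^d_\m(M_i)$ together with a duality argument, or directly from the general inequality $\operatorname{end}(M/\m M)\leq\reg(M)$. Thus
\[
\alpha_{2i+l}(M)\leq\reg(M_{2i+l})\leq\alpha_{2i+l}(M)+\reg(A),
\]
and both bounds are eventually linear in $i$ with leading coefficient $a_l$ by Proposition \ref{shift}. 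This would in fact already give eventual linearity up to a bounded error.

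\emph{Main obstacle.} The delicate step is justifying $\operatorname{end}(M_i/\m M_i)\leq\reg(M_i)$ over $A$, which is not a polynomial ring, so the Betti-number description in \ref{regpoly} does not apply directly. The first thing to try is the standard inequality $\operatorname{end}(\Tor_0^A(N,k))\leq\reg(N)$ for standard graded $k$-algebras, proved by induction on $\dim N$ via a general linear form (using that $k$ is infinite). If that is unavailable, a weaker fallback suffices: use the lower bound $\reg(M_i)\geq\reg(M_{i+1})-1$, which comes from Lemma \ref{reg}(i) applied to $0\to M_{i+1}\to F_i\to M_i\to0$ together with $\reg(M_{i+1})\geq\reg(F_i)-$const. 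Alternatively, one can compare with $\gamma_i(M)$, since $M_i\neq0$ forces $\reg(M_i)\geq\gamma_i(M)$, and $\gamma_{2i+l}(M)$ is eventually linear by Proposition \ref{shift}. This last route is the cleanest lower bound, and we would use it in the write-up.
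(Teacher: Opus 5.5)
Your proposal is correct and is essentially the paper's proof. The exact sequence $0\to H^d_{\m}(M_{i+1})\to H^d_{\m}(F_i)\to H^d_{\m}(M_i)\to 0$ gives $\reg(M_i)\le\reg(F_i)=\reg(A)+\alpha_i(M)$, and the bound $\alpha_i(M)\le\reg(M_i)$ completes the same sandwich. The paper settles the step you call delicate by graded independence of local cohomology: $\reg(M_i)$ can be computed over $Q$, where the Betti-number formula of \ref{regpoly} applies and the minimal generators are the same as over $A$.

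Do not switch to $\gamma_i(M)$ in the write-up. The inequality $\reg(N)\ge\operatorname{indeg}(N)$ for $N\neq 0$ rests on exactly the same fact (that $N$ is generated in degrees $\le\reg(N)$), so it does not avoid the delicate step. It also discards the bounded gap $0\le\reg(M_i)-\alpha_i(M)\le\reg(A)$ that Theorem \ref{Reg-syzygy} needs. Your Lemma \ref{reg}(i) fallback gives nothing here either, since $\reg(M_{i+1})\le\reg(F_i)$ always holds for these maximal Cohen--Macaulay modules.
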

\begin{proof}
   Since $A=Q/(\mathbf f)$ and $M_i$ is a finitely generated graded $A$-module, by
the graded independence theorem (see \cite[14.1.7]{Brodmann}), we have
$\reg(M_i)$ as an $A$-module is the same as
$\reg(M_i)$ computed as a $Q$-module.

Moreover, in the minimal graded free resolution $\F$,
the shifts occurring in $F_i$ are precisely the degrees of the minimal
homogeneous generators of $M_i$. Hence $\alpha_i(M)$ is the largest degree
of a minimal homogeneous generator of $M_i$.
Therefore, by \ref{regpoly}, $\reg(M_i)\geq \alpha_i(M)$ for all $i\geq0$.
Since $A$ is Cohen-Macaulay of dimension $d$, and both $F_i$ and $M_i$ are graded maximal Cohen-Macaulay $A$-modules for all $i\geq0$, we have
$$
\reg(F_i)=
\operatorname{end}(H^d_{A_+}(F_i))+d \, \text{ and }\, \reg(M_i)=\operatorname{end}(H^d_{A_+}(M_i))+d.
$$
For each $i\geq0$, we also have a short exact sequence of graded $A$-modules
$$
0\longrightarrow M_{i+1}\longrightarrow F_i
\longrightarrow M_i\longrightarrow0.
$$
Since $M_{i+1}$, $F_i$, and $M_i$ are maximal Cohen-Macaulay, the corresponding long exact sequence of graded local cohomology yields a short
exact sequence
$$
0\longrightarrow H^d_{A_+}(M_{i+1})\longrightarrow H^d_{A_+}(F_i)\longrightarrow H^d_{A_+}(M_i)
\longrightarrow0.
$$
Consequently, $\reg(F_i)=\max\{\reg(M_i),\reg(M_{i+1})\}$ for all $i\geq0$. In particular,
$\reg(M_i)\leq\reg(F_i)
=\reg(A)+\alpha_i(M).$ Combining this with $\alpha_i(M)\leq\reg(M_i),$ we obtain
$\alpha_i(M) \leq \reg(M_i)
\leq \alpha_i(M)+\reg(A)$
for all $i\geq0$. The result now follows from Proposition~\ref{shift}.

\end{proof}

In the previous result, we obtained eventual linear upper and lower bounds for $\reg(M_{2i+l})$, for $l=0,1$. In the next result, we consider the situation in which these bounds coincide asymptotically, so that the function $i\longmapsto \reg(M_{2i+l})$
is eventually given by a linear polynomial for each $l\in\{0,1\}$.

\begin{theorem}\label{Reg-syzygy}
Assume the same hypothesis as in \ref{hypo}. Let $\deg(f_j)=w$ for all $1\leq j\leq c$. Let $M$ be a graded maximal Cohen-Macaulay $A$-module. Then the function $i\longmapsto \reg(M_{2i+l})$
is eventually given by a linear polynomial for each $l\in\{0,1\}$. Moreover, both linear polynomials have leading coefficient $w$.
\end{theorem}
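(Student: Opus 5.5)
Since every $w_j=w$, Proposition~\ref{shift} already forces $a_l=w$. So Theorem~\ref{linearbound} gives
$$wi+e_l\le \reg(M_{2i+l})\le wi+e_l+\reg(A)\quad\text{for } i\gg0.$$
It therefore suffices to show that the bounded function $g_l(i):=\reg(M_{2i+l})-wi$ is eventually constant. My plan is to prove that $g_l$ is eventually non-increasing. A non-increasing integer-valued function that is bounded below is eventually constant, and that yields a linear polynomial with leading coefficient $w$.

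To get monotonicity I will use Proposition~\ref{cxM}. If $\cx_A(M)=0$ there is nothing to prove: $M$ would be free, which is excluded by \ref{hypo}, and in any case \ref{k} gives $\cx_A(M)\ge1$. So Proposition~\ref{cxM} supplies, for $i\ge n_0$, exact sequences
$$0\to K_i\to M_{i+2}\xrightarrow{\alpha_{i,M}} M_i(-w)\to 0.$$
The key observation is that all three modules are maximal Cohen--Macaulay. Indeed, $M_{i+2}$ and $M_i$ are MCM, and $K_i$ is MCM by the depth lemma, since it is a kernel of a surjection between MCM modules over the CM ring $A$. As in the proof of Theorem~\ref{linearbound}, local cohomology then gives a short exact sequence
$$0\to H^d_{A_+}(K_i)\to H^d_{A_+}(M_{i+2})\to H^d_{A_+}(M_i)(-w)\to0.$$
Hence
$$\reg(M_{i+2})=\max\{\reg(K_i),\ \reg(M_i)+w\}\ge \reg(M_i)+w.$$
This shows that $g_l$ is eventually non-decreasing. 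Combined with the upper bound from Theorem~\ref{linearbound}, $g_l$ is a non-decreasing integer function bounded above, so it is eventually constant. (So the monotonicity runs in the other direction from what I first planned, but the conclusion is the same.)

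The main obstacle is justifying the surjection on top local cohomology together with the vanishing of $H^{d-1}_{A_+}(K_i)$. This is exactly where the MCM property of $K_i$ is needed. If $\dim A=d=0$, everything is Artinian and $\reg=\operatorname{end}$; in that case the surjection $M_{i+2}\to M_i(-w)$ directly gives $\operatorname{end}(M_{i+2})\ge\operatorname{end}(M_i)+w$, so that case is handled separately and trivially.

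Finally, the eventual linear functions for $l=0$ and $l=1$ each have the form $wi+b_l$, since $g_l$ stabilizes at some constant $b_l$. Both leading coefficients equal $w$, which completes the plan.
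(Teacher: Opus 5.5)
Your proposal is correct and follows the paper's argument: you combine the sandwich $\alpha_i(M)\le\reg(M_i)\le\alpha_i(M)+\reg(A)$, with $\alpha_{2i+l}(M)=wi+e_l$, and the exact sequences $0\to K_i\to M_{i+2}\to M_i(-w)\to 0$ of maximal Cohen--Macaulay modules from Proposition~\ref{cxM}, which give $\reg(M_{i+2})\ge\reg(M_i)+w$, so a non-decreasing bounded integer sequence stabilizes. The only differences are minor: you run the argument uniformly for $\cx_A(M)\ge1$, which Proposition~\ref{cxM} permits, whereas the paper treats $\cx_A(M)=1$ separately via $M_{i+2}\cong M_i(-w)$; and your separate $d=0$ case is unnecessary, since there $H^0_{A_+}$ is the identity and the same argument applies verbatim.
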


\begin{proof}
Assume that $\cx_A(M)=1$. By the proof of Proposition
\ref{cxM}, after increasing $n_0$ if necessary, we have $M_{i+2}\cong M_i(-w)$ for all $i\geq n_0$. Choose an even integer $n\geq n_0$. Iterating the
above isomorphism, we obtain
$M_{2i+n}\cong M_n(-iw)
\text{ and }
M_{2i+n+1}\cong M_{n+1}(-iw)$
for all $i\geq1$. Consequently,
$\reg(M_{2i+n})=
\reg(M_n)+iw$ and
$\reg(M_{2i+n+1})=\reg(M_{n+1})+iw$ for all $i\geq1$. Hence, both the even and odd subsequences of $\reg(M_i)$ are eventually linear with leading coefficient
$w$. This proves the result.

  Now assume that $\cx_A(M)\geq2$. By Theorem
\ref{linearbound}, for all $i\gg0$, say $i\geq n_1$, we have
$\alpha_i(M)\leq \reg(M_i)
\leq \alpha_i(M)+\reg(A).$
Write $\reg(M_i)=\alpha_i(M)+\delta_i$ for all $i\geq n_1$. Then
$0\leq\delta_i\leq\reg(A)$
for all $i\geq n_1$. By Proposition \ref{cxM}, for all $i\geq n_0$, there is a short exact sequence of graded $A$-modules
$$
0\longrightarrow K_i\longrightarrow M_{i+2}
\longrightarrow M_i(-w)\longrightarrow0.
$$
Since $M_i$ is a graded maximal Cohen-Macaulay $A$-module for all
$i\geq0$, the module $K_i$ is also a graded maximal Cohen-Macaulay $A$-module for all
$i\geq n_0$. As in the proof of Theorem \ref{linearbound}, we obtain
$$
\reg(M_{i+2})
=
\max\{\reg(M_i(-w)),\reg(K_i)\}
$$
for all $i\geq n_0$. In particular, $\reg(M_i(-w))
\leq \reg(M_{i+2}).$
By Proposition \ref{shift}, after increasing $n_1$ if necessary, for each
$l\in\{0,1\}$ there exists $e_l\in\mathbb Z$ such that
$\alpha_{2i+l}(M)=wi+e_l$
for all $i\geq n_1$. Let $n=\max\{n_0,n_1\}$. Then, for all $i\geq n$,
\begin{equation*}
\begin{split}
\reg(M_{2i})+w
&=\reg(M_{2i}(-w))
\leq \reg(M_{2i+2})\\
&\implies
\alpha_{2i}(M)+\delta_{2i}+w
\leq
\alpha_{2i+2}(M)+\delta_{2i+2}\\
&\implies
wi+e_0+w+\delta_{2i}
\leq
w(i+1)+e_0+\delta_{2i+2}\\
&\implies
\delta_{2i}\leq\delta_{2i+2}.
\end{split}
\end{equation*}
Thus $\{\delta_{2i}\}$ is an eventually non-decreasing sequence of
non-negative integers bounded above by $\reg(A)$. Hence it
is eventually constant. Therefore $i\longmapsto\reg(M_{2i})$ is eventually linear with leading coefficient $w$. The same argument applied to the odd subsequence shows that $\{\delta_{2i+1}\}$ is eventually constant. Hence $i\longmapsto\reg(M_{2i+1})$ is also eventually linear with leading coefficient $w$. This completes the proof. 
    
\end{proof}

\section{Linearity of \texorpdfstring{$V$}{v}-number}
In this section, under the hypotheses of \ref{hypo}, we first prove that, for each $l\in{0,1}$, the local and global $v$-numbers of $F_{2i+l}$ are eventually given by linear polynomials in $i$. We then show that the sets $\Ass_A(M_i)$ stabilize for all $i\gg0$. Finally, when $\deg(f_j)=w$ for all $1\leq j\leq c$, we prove that the local and global $v$-numbers of $M_{2i+l}$ are eventually linear with leading coefficient $w$.
\begin{definition}
\normalfont
Let $R$ be a standard graded algebra over a field $k$, and let $M$ be a finitely generated graded $R$-module. For $\mathfrak{p} \in \operatorname{Ass}_R(M)$,
the \textit{local Vasconcelos invariant} (or the local $v$-number) of $M$ at $\mathfrak{p}$ is defined as $v_{\mathfrak{p}}(M)
:= \inf\{n : \mathfrak{p} = (0 :_R x)
\text{ for some } x \in M_n\}.$
The \textit{Vasconcelos invariant} (or $v$-number) of $M$ is defined by
$$
v(M)=\min\left\{v_{\p}(M):\p\in\Ass_R(M)
\right\}.$$
By convention, $v(0)=\infty$ and $v_{\p}(M)=\infty$ if $\p\notin \Ass_A(M)$. 
From the definition of $v$-number, it can be easily checked that $v(M(-i))=v(M)+i$ and $v_{\p}(M(-i))=v_{\p}(M)+i$ for all $\p\in \Ass_R(M)$.
\end{definition}


\begin{lemma}
\label{v-ses}
Let $A$ be a graded Cohen-Macaulay ring, and let
$0\longrightarrow L\longrightarrow X\longrightarrow N\longrightarrow0$
be a short exact sequence of finitely generated graded maximal
Cohen-Macaulay $A$-modules. Then for each $\p\in \Ass_A(X)$,
$\min\{v_{\p}(L),v_{\p}(N)\}\leq v_{\p}(X)\leq v_{\p}(L).$ Moreover, $\min\{v(L),v(N)\}\leq v(X)\leq v(L).$
\end{lemma}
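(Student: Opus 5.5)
The plan is to prove the two inequalities for local $v$-numbers directly from the definition, and then deduce the global statement by taking minima. The only structural input is that associated primes of maximal Cohen--Macaulay modules are minimal primes of $A$ of maximal dimension. Throughout, the elements realizing $v$-numbers are homogeneous, so their annihilators are homogeneous. Recall also the convention $v_{\p}(\cdot)=\infty$ when $\p$ is not an associated prime.

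\emph{Upper bound $v_{\p}(X)\leq v_{\p}(L)$.} If $\p\notin\Ass_A(L)$, then $v_{\p}(L)=\infty$ and there is nothing to prove. Otherwise, choose a homogeneous $x\in L_n$ with $(0:_A x)=\p$ and $n=v_{\p}(L)$. Since $L\to X$ is an injective degree-preserving map, the image of $x$ in $X_n$ has the same annihilator $\p$. Hence $v_{\p}(X)\leq n$.

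\emph{Lower bound.} Let $n=v_{\p}(X)$, and pick a homogeneous $x\in X_n$ with $(0:_A x)=\p$. Let $y$ be its image in $N_n$.
\begin{enumerate}
\item If $y=0$, then $x$ lies in $L$ with annihilator $\p$, so $v_{\p}(L)\leq n$.
\item If $y\neq0$, then $\p\subseteq(0:_A y)$. Since $Ay\cong A/(0:_A y)$ is a nonzero submodule of $N$, some $\q\in\Ass_A(Ay)\subseteq\Ass_A(N)$ contains $(0:_A y)\supseteq\p$.
\end{enumerate}
In case (2), the key step is to force $\q=\p$. Because $A$ is Cohen--Macaulay and $X,N$ are maximal Cohen--Macaulay, every associated prime $\mathfrak r$ of $X$ or of $N$ satisfies $\dim A/\mathfrak r=\dim A$. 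Applying this to $\p\in\Ass_A(X)$ and $\q\in\Ass_A(N)$, the inclusion $\p\subseteq\q$ gives $\p=\q$. Therefore $(0:_A y)=\p$, so $\p\in\Ass_A(N)$ and $v_{\p}(N)\leq n$. In either case $\min\{v_{\p}(L),v_{\p}(N)\}\leq v_{\p}(X)$.

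\emph{Global statement.} Since $\Ass_A(L)\subseteq\Ass_A(X)$, the upper bound gives $v(X)\leq\min_{\p\in\Ass_A(L)}v_{\p}(X)\leq\min_{\p\in\Ass_A(L)}v_{\p}(L)=v(L)$. For the lower bound, choose $\p\in\Ass_A(X)$ with $v(X)=v_{\p}(X)$. Then
\[
v(X)\geq\min\{v_{\p}(L),v_{\p}(N)\}\geq\min\{v(L),v(N)\},
\]
where the second inequality uses that a finite $v_{\p}$ forces $\p$ to be an associated prime of the corresponding module. If $X=0$, then $\Ass_A(X)$ is empty and the statements are vacuous or follow from the conventions.

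The main point to handle carefully is the dimension argument identifying $\q$ with $\p$. It relies on the standard fact that a maximal Cohen--Macaulay module $N$ satisfies $\dim A/\q=\dim N=\dim A$ for all $\q\in\Ass_A(N)$. This holds because $\depth N\leq\dim A/\q$ for every $\q\in\Ass_A(N)$, together with $\depth N=\dim A$ in the graded $*$-local setting.
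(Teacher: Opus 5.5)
Your proof is correct and follows essentially the same route as the paper. Both arguments take a homogeneous element realizing $v_{\p}(X)$ and split on whether it lies in $L$. In the other case, both use the maximal Cohen--Macaulay hypothesis to force $\q=\p$. You phrase that step via $\dim A/\q=\dim A$ rather than $\Ass_A(N)\subseteq\operatorname{Min}(A)$, which is equivalent here. The only other difference is that you prove the upper bound and the global inequalities directly from the definitions, where the paper cites an external proposition (Proposition~3.1(1) of its reference on $v$-numbers of modules).
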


\begin{proof}
Let $\p\in \Ass_A(X).$
The inequality $v_{\p}(X)\leq v_{\p}(L)$ follows from Proposition~3.1(1) of \cite{ghoshv}. Note that if $\p\notin \Ass_A(L)$, then $v_{\p}(L)=\infty$ and the inequality is automatic.
We prove the other inequality. Choose a homogeneous element $x\in X$
such that $(0:_A x)=\p$ and $\deg(x)=v_{\p}(X).$ Since $X$ is maximal Cohen-Macaulay and $A$ is Cohen-Macaulay,
$\Ass_A(X)\subseteq\operatorname{Min}(A).$

If $x\in L$, then $v_{\p}(X)=\deg(x)\geq v_{\p}(L)$. It follows that $v_{\p}(X)=v_{\p}(L)$ and hence $v_{\p}(X)\geq\min\{v_{\p}(L),v_{\p}(N)\}.$ Suppose now that $x\notin L$, and let $\overline{x}$ denote its nonzero image in $N$. Since $(0:_A x)=\p$, we have $\p\subseteq(0:_A\overline{x}).$ Choose
$\q\in\Ass_A(A\overline{x}).$
As $A\overline{x}$ is a submodule of $N$, we have
$\q\in\Ass_A(N)
\subseteq\operatorname{Min}(A).$ Moreover, $\p\subseteq(0:_A\overline{x})\subseteq\q.$
Since both $\p$ and $\q$ are minimal primes of $A$, it follows that
$\p=(0:_A\overline{x})=\q.
$ Therefore
$\deg(x)=\deg(\overline{x})\geq v_{\p}(N).
$ Thus, in either case, $v_{\p}(X)\geq\min\{v_{\p}(L),v_{\p}(N)\}.$ This proves the local case. The global inequalities follow from Proposition 3.1(1) of \cite{ghoshv}, and the local inequalities proved above.
\end{proof}
By Lemma $\ref{v-ses}$, we can conclude that $v_{\p}(L\oplus N)=\inf\{v_{\p}(L),v_{\p}(N)\}$ for all $\p\in \Ass_R(M\oplus N)$ and $v(L\oplus N)=\inf\{v(M),v(N)\}$.

\begin{proposition}
\label{vFi}
Assume the same hypothesis as in \ref{hypo}. Then, for each $l\in\{0,1\}$, the function $i\longmapsto v_{\p}(F_{2i+l})$ is eventually given by a linear polynomial for all $\p\in \Ass(A)$. In particular, the function $i\longmapsto v(F_{2i+l})$ is eventually a linear polynomial.
\end{proposition}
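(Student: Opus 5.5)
The plan is to reduce everything to the $v$-numbers of the shifted copies $A(-s)$ and then use the smallest shift $\gamma_i(M)$ from Proposition \ref{shift}. Write $F_i=\bigoplus_{s}A(-s)^{\beta_{i,s}}$. Since $A$ is Cohen--Macaulay, every free $A$-module is maximal Cohen--Macaulay, and $\Ass_A(F_i)=\Ass_A(A)=\operatorname{Min}(A)$ whenever $F_i\neq0$. By \ref{k}, $\projdim_A(M)=\infty$, so $F_i\neq 0$ for all $i$, and each $\p\in\Ass(A)$ lies in $\Ass_A(F_i)$ for all $i$.

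Applying Lemma \ref{v-ses} repeatedly to split short exact sequences, or directly the remark following it that $v_{\p}(L\oplus N)=\min\{v_{\p}(L),v_{\p}(N)\}$, we get
$$v_{\p}(F_i)=\min_{s:\,\beta_{i,s}\neq0} v_{\p}(A(-s))=\min_{s:\,\beta_{i,s}\neq0}\bigl(v_{\p}(A)+s\bigr)=v_{\p}(A)+\gamma_i(M).$$
Here we use the shift formula $v_{\p}(A(-s))=v_{\p}(A)+s$ recorded after the definition of the $v$-number. The quantity $v_{\p}(A)$ is a finite constant, independent of $i$, because $\p\in\Ass(A)$. The same argument gives $v(F_i)=v(A)+\gamma_i(M)$. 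Alternatively, take the minimum over $\p\in\Ass(A)$ of the previous identity, which is a finite set.

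By Proposition \ref{shift}, $\gamma_{2i+l}(M)=a'_l i+e'_l$ for $i\gg0$, with $a'_l\in\{w_1,\dots,w_c\}$. Hence $v_{\p}(F_{2i+l})=a'_l i+e'_l+v_{\p}(A)$ and $v(F_{2i+l})=a'_l i+e'_l+v(A)$ for $i\gg0$. These are linear polynomials, and their leading coefficient $a'_l$ is the same for every $\p$.

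The only step needing care is the direct-sum formula with many summands. One concern is that $v_{\p}$ of a summand equals $\infty$ only when $\p$ is not an associated prime, but here every summand is a shift of $A$ and so has $\p$ as an associated prime. I would verify the formula by induction on the number of summands, using Lemma \ref{v-ses} with $L$ one summand and $N$ the rest. Since the sequence splits, the two sides can be swapped, which gives both inequalities and hence equality.
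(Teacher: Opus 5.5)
Your proposal is correct and follows the same route as the paper. Write $F_i=\bigoplus_s A(-s)^{\beta_{i,s}}$, use the shift and direct-sum formulas to get $v_{\p}(F_i)=v_{\p}(A)+\gamma_i(M)$ and $v(F_i)=v(A)+\gamma_i(M)$, then apply Proposition \ref{shift}. Your induction proving the direct-sum formula from Lemma \ref{v-ses} on split sequences, by swapping the roles of $L$ and $N$, spells out a step the paper only records as a remark.
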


\begin{proof}
Note that 
$\displaystyle F_i=\bigoplus_{s\in\mathbb Z}A(-s)^{\beta_{i,s}},$
and $\gamma_i(M)=
\min\{s\in\mathbb Z:\beta_{i,s}\neq0\}.$ Thus, $\Ass_A(F_i)=\Ass(A)$ for all $i\geq 0.$  Since $v(A(-s))=v(A)+s$, we have $v_{\p}(F_i)=v_{\p}(A)+\gamma_i(M)$ for all $\p\in \Ass(A)$ and $i\geq 0$. It follows that for all $i\geq 0$, $v(F_i)=v(A)+\gamma_i(M)$.
By Proposition \ref{shift}, for each $l\in\{0,1\}$, there exists $a_l\in \{w_1,\ldots ,w_c\}$ and
$e_l\in\mathbb Z$ such that
$\gamma_{2i+l}(M)=a_li+e_l$
for all $i\gg0$. Thus, for each $\p\in \Ass(A)$,
$v_{\p}(F_{2i+l})=v_{\p}(A)+\gamma_{2i+l}(M)
=a_li+(v_{\p}(A)+e_l)$
for all $i\gg0$. Consequently, $v(F_{2i+l})=v(A)+\gamma_{2i+l}(M)
=a_li+(v(A)+e_l)$
for all $i\gg0$. This proves the result.
\end{proof}

We begin by studying the asymptotic behavior of the associated primes of
the syzygies of a module over a complete intersection. The following
result shows that, for each $l\in \{0,1\}$, the sets $\Ass_A(M_{2i+l})$ eventually stabilize.

\begin{theorem} \label{stable-ass}
Let $(A,\m)=(Q/(\mathbf f),\n/(\mathbf f))$, where $(Q,\n)$ is a regular local ring with infinite residue field and
$\mathbf f=f_1,\ldots,f_c$ is a $Q$-regular sequence. Let $M$ be a finitely generated maximal Cohen-Macaulay $A$-module with $\cx_A(M)\geq1$, and set $M_i=\Syz_i^A(M)$ for $i\geq0$. Then, for each $l\in\{0,1\}$, the sets $\Ass_A(M_{2i+l})$ are eventually stable. Moreover, the result holds under hypothesis \ref{hypo}.
\end{theorem}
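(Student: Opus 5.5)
The approach is to show that, along each parity, the supports of the syzygies form an increasing chain inside a fixed finite set, so the chain must stabilize. Since $A$ is Cohen-Macaulay and each $M_i$ is maximal Cohen-Macaulay, we have $\Ass_A(M_i)\subseteq\operatorname{Min}(A)$, as noted in the proof of Lemma \ref{v-ses}. Moreover, for a prime $\p\in\operatorname{Min}(A)$, the conditions $\p\in\Ass_A(M_i)$ and $(M_i)_{\p}\neq 0$ are equivalent. Indeed, if $(M_i)_{\p}\neq0$ then $\p$ is a minimal element of $\operatorname{Supp}(M_i)$, because $\p$ is minimal in $\operatorname{Spec}A$, and minimal primes of the support are associated. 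The converse is clear. Hence
$$\Ass_A(M_i)=\operatorname{Min}(A)\cap\operatorname{Supp}_A(M_i),$$
and this is a subset of the finite set $\operatorname{Min}(A)$. It therefore suffices to show that $\operatorname{Supp}_A(M_{i})\subseteq\operatorname{Supp}_A(M_{i+2})$ for all $i\gg0$.

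For the local case, I will use the local version of Proposition \ref{cxM}, namely Proposition 2.13 of \cite{asymptotic}. Its proof is exactly the proof of Proposition \ref{cxM} with the internal grading forgotten, and it needs no equal-degree assumption. The steps are as follows. By \ref{k}, $\Ext_A^*(M,k)$ is finitely generated over $S=k[t_1,\ldots,t_c]$. Since $k$ is infinite, choose a linear form $\alpha=\sum a_jt_j$ with $(0:_{\Ext_A^*(M,k)}\alpha)$ of finite length. Then $\alpha$ acts injectively on $\Ext^i_A(M,k)$ for $i\ge n_0$. Dualizing, the chain map $\alpha'=\sum a_jt'_j$ satisfies that $\alpha'_i\otimes k$ is surjective, so by Nakayama $\alpha'_i:F_{i+2}\to F_i$ is surjective. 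This gives surjections $M_{i+2}\twoheadrightarrow M_i$ for all $i\ge n_0$. A surjection gives $\operatorname{Supp}(M_i)\subseteq\operatorname{Supp}(M_{i+2})$. So for fixed $l$, the sets $\Ass_A(M_{2i+l})$ form an eventually increasing chain of subsets of $\operatorname{Min}(A)$, and such a chain is eventually constant. This is the main step; the only subtle point is checking that nothing in the argument for Proposition \ref{cxM} used the bigrading beyond bookkeeping.

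For the graded case under \ref{hypo}, I reduce to the local case by localizing at $\m$. All associated primes of the graded modules $M_i$ are homogeneous, hence contained in $\m$. Consequently $\p\mapsto\p A_{\m}$ is a bijection $\Ass_A(M_i)\to\Ass_{A_\m}((M_i)_\m)$. Also, $A_\m=Q_\n/(\mathbf f)$, where $Q_\n$ is regular local with infinite residue field $k$ and $\mathbf f$ is $Q_\n$-regular. Localizing the minimal graded resolution $\F$ at $\m$ gives a minimal free resolution of $M_\m$, because the differentials still have entries in $\m A_\m$. Hence $(M_i)_\m\cong\Syz_i^{A_\m}(M_\m)$. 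Finally, $M_\m$ is maximal Cohen-Macaulay over $A_\m$, and $\cx_{A_\m}(M_\m)=\cx_A(M)\ge1$ by \ref{k}, since the Betti numbers agree. The local case then yields stabilization of $\Ass_{A_\m}((M_{2i+l})_\m)$, and the bijection above transfers this back to $\Ass_A(M_{2i+l})$.
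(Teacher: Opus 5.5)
Your argument is correct, and it takes a different and shorter route than the paper's.

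\textbf{The paper's route.} The paper argues by induction on $\cx_A(M)$. It uses the full content of \cite[Proposition 2.13]{asymptotic}: the kernels $K_i$ of $M_{i+2}\twoheadrightarrow M_i$ satisfy $\cx_A(K_i)=r-1$ and $K_{i+1}\cong\Syz_1^A(K_i)$. By induction this gives stable sets $\mathcal B_l$ of $\Ass_A(K_{2i+l})$. The sandwich $\mathcal B_0\subseteq\Ass_A(M_{2i+2})\subseteq\Ass_A(M_{2i})\cup\mathcal B_0$ then shows that $\Ass_A(M_{2i})\setminus\mathcal B_0$ is eventually \emph{descending}. The complexity-one case is treated separately via $M_{i+2}\cong M_i$.

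\textbf{Your route.} You use only the surjections $M_{i+2}\twoheadrightarrow M_i$, together with the identity $\Ass_A(L)=\operatorname{Min}(A)\cap\operatorname{Supp}_A(L)$ for maximal Cohen--Macaulay $L$. This makes $\Ass_A(M_{2i+l})$ eventually \emph{ascending} inside the finite set $\operatorname{Min}(A)$. You need no induction, no complexity count or syzygy relation for the kernels, and no separate complexity-one case. You also get the extra information that $\Ass_A(M_i)\subseteq\Ass_A(M_{i+2})$ for all $i\geq n_0$. The subtle point you flag, the existence of the surjections in the local case, is already part of \cite[Proposition 2.13]{asymptotic}, which the paper cites.

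\textbf{What the paper's route buys.} It produces the auxiliary sets $\mathcal B_l\subseteq\mathcal A_l$ and the inductive framework. These are reused in the proof of Theorem \ref{local-v-linearity}, which splits into the cases $\p\in\mathcal B_l$ and $\p\notin\mathcal B_l$.

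Your reduction of the graded case to the local case, by localizing at $\m$ and using that associated primes of graded modules are homogeneous, is the same as the paper's.
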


\begin{proof}
We prove by induction on $\cx_A(M)$. Suppose $\cx_A(M)=1$. Then $M_{i+2}\cong M_i$ for all $i\gg0$. Thus $\Ass_A(M_{i+2})=\Ass_A(M_i)$ for all $i\gg0$, and the result follows when $\cx_A(M)=1$.

Now, suppose $\cx_A(M)=r\geq2$ and the result holds for $r-1$. By \cite[Proposition 2.13]{asymptotic},  for all $i\gg0$ (say $i\geq n_0$), there is a short exact sequence
$$
0\longrightarrow K_i\longrightarrow M_{i+2}
\longrightarrow M_i\longrightarrow0,
$$
where $\cx_A(K_i)=r-1$ and $K_{i+1}\cong\Syz_1^A(K_i).$
Moreover, by the Depth Lemma, each $K_i$ is maximal Cohen-Macaulay. Since $K_{i+1}\cong \Syz_1^A(K_i)$, the induction hypothesis yields an integer $ n_1\geq n_0 $ such that, for each $l\in\{0,1\}$, the sets $\Ass_A(K_{2i+l})$ are constant for all $i\geq n_1$. Let $\mathcal B_l$ denote the stable value of $\Ass_A(K_{2i+l})$. For all $i\geq n_1$, the
above short exact sequence gives $\displaystyle \mathcal B_0 \subseteq \Ass_A(M_{2i+2})\subseteq \Ass_A(M_{2i})\cup \mathcal B_0.$

Consequently,
$\Ass_A(M_{2i+2})
\setminus\mathcal B_0 \subseteq \Ass_A(M_{2i}) \setminus \mathcal B_0.$
Thus, the sets
$\Ass_A(M_{2i})
\setminus\mathcal B_0$
form an eventually descending sequence of subsets of a finite set $\text{Min}(A)$, and hence they stabilize. Therefore,
$\Ass_A(M_{2i})$ is eventually stable. Similarly, we can show that the set $\Ass_A(M_{2i+1})$ is eventually stable by the same argument applied to the odd indices.

For the graded case, let $\mathfrak m=A_+$. Localizing hypothesis \ref{hypo} at $\mathfrak m$, we obtain a local complete intersection
$A_{\mathfrak m}$. Moreover,
$(M_i)_{\mathfrak m}\cong\Syz_i^{A_{\mathfrak m}}(M_{\mathfrak m}),$
and the complexity is preserved. Hence, by the local case, the sets
$\Ass_{A_{\mathfrak m}}\bigl((M_{2i+l})_{\mathfrak m}\bigr)$
are eventually stable for each $l\in\{0,1\}$. Since every associated prime of the graded module $M_i$ is homogeneous and therefore contained in $\mathfrak m$, we have
$\Ass_{A_{\mathfrak m}}((M_i)_{\mathfrak m})=\{\mathfrak pA_{\mathfrak m}\, |\,
\mathfrak p\in\Ass_A(M_i)\}.$
Taking contractions to $A$, the result follows.
\end{proof}

We now turn to the asymptotic behavior of the local and global $v$-numbers of syzygies. We need the following lemma to prove the main result. 
\begin{lemma}
\label{uniform-annihilator}
Assume the same hypotheses as in \ref{hypo} and let
$\varnothing \neq \mathcal B\subseteq \Ass(A)=\operatorname{Min}(A)$. Let
$\p\in\operatorname{Min}(A)\setminus\mathcal B$.
Then there exists a homogeneous element
$a\in A\setminus\p$ and an integer $N\geq1$ such that $a^N L=0$ for all finitely generated graded $A$-modules $L$ satisfying
$\Ass_A(L)\subseteq\mathcal B.$
\end{lemma}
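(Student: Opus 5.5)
Let $\q_1,\ldots,\q_r$ be the primes of $\mathcal B$; each is a homogeneous minimal prime of $A$ distinct from $\p$. The plan is to produce one homogeneous element $a\notin\p$ lying in every $\q_s$, and then bound the nilpotency exponent uniformly by working in $A$ itself rather than in $L$.

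\emph{Choice of $a$.} Since $\p$ and $\q_s$ are distinct minimal primes, $\q_s\not\subseteq\p$. Hence $\q_1\cdots\q_r\not\subseteq\p$, because $\p$ is prime. The ideal $I=\q_1\cap\cdots\cap\q_r$ is homogeneous and not contained in $\p$, so some homogeneous element $a\in I$ lies outside $\p$: an ideal generated by homogeneous elements all in $\p$ would be contained in $\p$.

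\emph{Uniform exponent.} Because $A$ is Noetherian and $\sqrt{0}=\bigcap_{\q\in\operatorname{Min}(A)}\q$, the nilradical is nilpotent. Choose $N_0$ with $(\sqrt{0})^{N_0}=0$; $N_0$ depends only on $A$. In fact $A$ is a complete intersection, hence Cohen--Macaulay and without embedded primes, so $0=\bigcap_{\q}Q_\q$ is a primary decomposition with $\q$-primary components $Q_\q$. I will use this decomposition directly.

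For each $\q\in\mathcal B$, pick $N_\q$ with $\q^{N_\q}\subseteq Q_\q$; this is possible since $Q_\q$ is $\q$-primary in a Noetherian ring. Set $N=\max_{\q\in\mathcal B}N_\q$.

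\emph{The module estimate.} Let $L$ be finitely generated graded with $\Ass_A(L)\subseteq\mathcal B$, and let $x\in L$ be nonzero. Then $\Ass_A(Ax)\subseteq\mathcal B\subseteq\operatorname{Min}(A)$, and $Ax\cong A/(0:_Ax)$. The key claim is that $(0:_Ax)$ contains $\bigcap_{\q\in\Ass(Ax)}Q_\q$, which yields $a^N x=0$.

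To prove the claim, set $J=(0:_Ax)$. The associated primes of $A/J$ are minimal primes of $A$ containing $J$, and all of them are minimal over $J$. So $J$ has a primary decomposition $J=\bigcap_{\q\in\Ass(A/J)}J_\q$ with $J_\q$ $\q$-primary. Localizing at such a $\q$, the ring $A_\q$ is Artinian and $J_\q=JA_\q\cap A\supseteq 0A_\q\cap A=Q_\q$. Therefore $J\supseteq\bigcap_{\q\in\Ass(A/J)}Q_\q\supseteq\prod_{\q}\q^{N}$.

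Now $a\in\q$ for every $\q\in\mathcal B$, so $a^{N}\in\q^{N}\subseteq Q_\q$ for each such $\q$. This gives $a^N\in\bigcap_{\q\in\mathcal B}Q_\q\subseteq J$, so $a^Nx=0$. Since $x$ was arbitrary, $a^NL=0$.

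The main step to check carefully is this uniformity. It holds because the bound $N$ comes from $A$ (the components $Q_\q$), not from $L$, using that the associated primes of $L$ are minimal primes of $A$.
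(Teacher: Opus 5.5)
Your proof is correct and follows essentially the paper's route. You use the same choice of a homogeneous $a\in\bigl(\bigcap_{\q\in\mathcal B}\q\bigr)\setminus\p$, and your uniform exponent comes from $A$ alone: $\q^{N}\subseteq Q_\q=\ker(A\to A_\q)$ is the paper's nilpotency of $\q A_\q$. Your primary decomposition of $(0:_Ax)$ into isolated components then does the job of the paper's injection $L\hookrightarrow\bigoplus_{\q\in\Ass_A(L)}L_\q$. The nilradical exponent $N_0$ you introduce is never used and can be dropped.
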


\begin{proof}
For all $\q\in\mathcal B$, choose a homogeneous element
$a_{\q}\in\q\setminus\p,$ and set
$a=\prod_{\q\in\mathcal B}a_{\q}.$
Since $\p$ is prime, $a\notin\p$, while
$a\in\q$ for all $\q\in\mathcal B$. For each $\q\in\mathcal B$, $A_{\q}$ is a zero-dimensional local ring. Hence, its maximal ideal $\q A_{\q}$ is nilpotent. Thus, there exists an integer
$N\geq1$, independent of $L$, such that
$a^N=0 \text{ in }A_{\q}$
for all $\q\in\mathcal B$. Now, let $L$ be a finitely generated $A$-module with
$\Ass_A(L)\subseteq\mathcal B$. For all
$\q\in\Ass_A(L)$, we have $(a^NL)_{\q}=a^NL_{\q}=0.$
Consider the natural map $\displaystyle
L\longrightarrow
\bigoplus_{\q\in\Ass_A(L)}L_{\q}.$ Let $0\neq x\in L$ and $\q\in \Ass_A(Ax)\subseteq \Ass_A(L).$ It follows that $x/1\neq0$ in $L_{\q}$. Hence, the given map is injective. Since
$(a^NL)_{\q}=0$ for all $\q\in\Ass_A(L)$, it follows that $a^NL=0.$
\end{proof}

\begin{theorem}
\label{local-v-linearity}
Assume the same hypotheses as in \ref{hypo}, and suppose that $\deg(f_j)=w$ for all $1\leq j\leq c$. Let $M$ be a graded maximal Cohen-Macaulay $A$-module with
$\cx_A(M)\geq1$. For each $l\in\{0,1\}$, set
$\mathcal A_l=\Ass_A(M_{2i+l})
\text{ for }i\gg0$ (see Theorem \ref{stable-ass}). Then, for each $\p\in\mathcal A_l$, there exists $b_{\p,l}\in\mathbb Z$ such that
$v_{\p}(M_{2i+l})=wi+b_{\p,l}$ for all $i\gg0$. In particular, $\displaystyle v(M_{2i+l})= \min_{\p\in\mathcal A_l}\{v_{\p}(M_{2i+l}\}=wi+b_l$ for all $i\gg0$, where $\displaystyle b_l=\min_{\p\in\mathcal A_l}b_{\p,l}$.
\end{theorem}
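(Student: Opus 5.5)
The plan is to induct on $\cx_A(M)$, using the short exact sequences of Proposition~\ref{cxM} together with Lemma~\ref{v-ses}. Alongside the induction I will prove a uniform upper bound of the form $v_{\p}(M_i)\leq \alpha_i(M)+C$, and then run a monotonicity argument like the one in the proof of Theorem~\ref{Reg-syzygy}.

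\emph{Base case $\cx_A(M)=1$.} By the remark after Proposition~\ref{cxM}, $M_{i+2}\cong M_i(-w)$ for $i\gg0$. Since $v_{\p}(L(-w))=v_{\p}(L)+w$, both $v_{\p}(M_{2i+l})$ and $v(M_{2i+l})$ are eventually of the form $wi+b$.

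\emph{Uniform upper bound.} I claim there is a constant $C$, independent of $i$, with $v_{\p}(M_i)\leq \alpha_i(M)+C$ for every $\p\in\Ass_A(M_i)$.

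1. Let $\p\in\Ass_A(M_i)\subseteq\operatorname{Min}(A)$. Some minimal generator $g$ of $M_i$, of degree at most $\alpha_i(M)$, has $g/1\neq0$ in $(M_i)_{\p}$.
2. As in Lemma~\ref{uniform-annihilator}, choose a homogeneous $u\notin\p$ lying in every other minimal prime, with $u^{N'}$ vanishing in $A_{\q}$ for all $\q\in\operatorname{Min}(A)\setminus\{\p\}$. Then $\Ass_A(Au^{N'}g)=\{\p\}$.
3. Let $L$ be the Loewy length of the Artinian ring $A_{\p}$, and let $D$ bound the degrees of a homogeneous generating set of $\p$. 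Multiplying $u^{N'}g$ by a suitable product of at most $L$ of these generators gives a homogeneous $z\neq 0$ with $z/1$ in the socle of $(Az)_{\p}$.
4. For $b\in\p$, the element $bz$ lies in a module whose only associated prime is $\p$, and $bz$ vanishes at $\p$; hence $bz=0$. Conversely $(0:_A z)\subseteq\p$. So $(0:_A z)=\p$, with $\deg z\leq\alpha_i(M)+N'\deg u+LD$.
5. There are only finitely many minimal primes, so $C$ can be taken uniform in $i$ and $\p$.

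\emph{Inductive step, $\cx_A(M)\geq 2$.} Use the sequences $0\to K_{2i}\to M_{2i+2}\to M_{2i}(-w)\to0$ of Proposition~\ref{cxM}. Here $K_{i+1}\cong\Syz^A_1(K_i)$, $\cx_A(K_i)=\cx_A(M)-1$, and the $K_i$ are maximal Cohen--Macaulay by the depth lemma.

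1. By induction, with $\mathcal B_0$ the stable set $\Ass_A(K_{2i})$, for each $\q\in\mathcal B_0$ we have $v_{\q}(K_{2i})=wi+c_{\q}$ for $i\gg0$.
2. Fix $\p\in\mathcal A_0$ and set $\delta_i=v_{\p}(M_{2i})-wi$.
3. Lemma~\ref{v-ses} gives
\[
\min\{v_{\p}(K_{2i}),\,v_{\p}(M_{2i})+w\}\leq v_{\p}(M_{2i+2})\leq v_{\p}(K_{2i}),
\]
that is, $\min\{c_{\p}-w,\ \delta_i\}\leq\delta_{i+1}\leq c_{\p}-w$. Here $c_{\p}=\infty$ when $\p\notin\mathcal B_0$, in which case the upper bound is vacuous.
4. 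By Proposition~\ref{shift} with all $w_j=w$, we have $\alpha_{2i}(M)=wi+e_0$. The uniform bound then gives $\delta_i\leq e_0+C$ for all $i\gg0$.
5. If $\delta_i=c_{\p}-w$ for some $i$, then $\delta$ is constant from that point on. Otherwise $\delta_{i+1}\geq\delta_i$ for all large $i$, so $\{\delta_i\}$ is a non-decreasing sequence of integers bounded above, hence eventually constant.
6. Either way $v_{\p}(M_{2i})=wi+b_{\p,0}$ for $i\gg0$. The odd indices are handled identically.
7. The statement about $v(M_{2i+l})$ follows by taking the minimum over the finite set $\mathcal A_l$.

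The main obstacle is the case $\p\notin\mathcal B_0$. There Lemma~\ref{v-ses} gives only monotonicity and no upper bound, so the independent bound $v_{\p}(M_i)\leq\alpha_i(M)+C$ is essential. The delicate point in proving it is the uniformity of $C$ in $i$: it must depend only on $A$, through the Loewy lengths of the rings $A_{\p}$, generator degrees of the minimal primes, and the separating elements $u$, and not on $M_i$. If that bound fails, the fallback is to compare $v_{\p}(M_{2i+2})$ with $v_{\p}(M_{2i})+w$ by lifting along the surjection and multiplying by the annihilator $a^N$ of Lemma~\ref{uniform-annihilator}. This yields at least bounded increments $\delta_{i+1}-\delta_i\leq N\deg a$, which would then still need to be combined with some upper bound.
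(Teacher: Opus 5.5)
Your proposal is correct. It differs from the paper's proof mainly in how you get the upper bound on $\delta_i$.

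\textbf{The paper's route.} The paper splits into two cases.
\begin{itemize}
\item For $\p\in\mathcal B_l$, it gets the upper bound from $K_{2i+l-2}\subseteq M_{2i+l}$ together with the inductive linearity of $v_\p(K_{2i+l})$.
\item For $\p\notin\mathcal B_l$, it composes the surjections into $M_{i_0+2t}\twoheadrightarrow M_{i_0}(-tw)$ and shows the kernels $L_t$ have associated primes in $\mathcal B_l$. Lemma \ref{uniform-annihilator} then gives a single element $a^N$ killing every $L_t$. Lifting a witness for $v_\p(M_{i_0})$ and multiplying by $a^N$ gives $v_\p(M_{i_0+2t})\le v_\p(M_{i_0})+tw+N\deg a$.
\end{itemize}

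\textbf{Your route.} You prove an absolute estimate $v_\p(X)\le \alpha_0(X)+C$ for every graded $X$ with $\Ass_A(X)\subseteq\operatorname{Min}(A)$, where $C$ depends only on $A$. You then combine it with $\alpha_{2i+l}(M)=wi+e_l$ from Proposition \ref{shift}. Your proof of the estimate is sound:
\begin{itemize}
\item pick a generator that survives at $\p$;
\item kill the other minimal primes with $u^{N'}$;
\item climb at most Loewy-length many steps toward the socle at $\p$, keeping $z/1\neq 0$.
\end{itemize}

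\textbf{Comparison.} Your approach handles both cases at once and avoids the $L_t$ construction. It also yields the sandwich $\gamma_i(M)\le v_\p(M_i)\le \alpha_i(M)+C\le\reg(M_i)+C$. The cost is that it depends on Proposition \ref{shift}, and hence on the external linearity theorem for Betti degrees. The paper's Case~2 instead relies only on the surjections of Proposition \ref{cxM} and Lemma \ref{uniform-annihilator}.

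\textbf{Minor points.}
\begin{itemize}
\item You still need induction on complexity, but only for monotonicity when $\p\in\mathcal B_l$.
\item The case split in your step~5 is redundant. For large $i$ the bound $\delta_{i+1}\le c_\p-w$ already forces $\min\{c_\p-w,\delta_i\}=\delta_i$.
\item Your fallback remark is unnecessary. As stated it would only give bounded increments. The point of the paper's construction is that composing the surjections lets one fixed $a^N$ work for all $t$.
\end{itemize}
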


\begin{proof}
We proceed by induction on
$r=\cx_A(M).$ Suppose that $r=1$. By Proposition \ref{cxM}, we have
$M_{i+2}\cong M_i(-w)$
for all $i\gg0$. Hence, for all
$\p\in\Ass_A(M_i)$,
$v_{\p}(M_{i+2})=v_{\p}(M_i)+w.$
Therefore, for each $l\in\{0,1\}$ and
$\p\in\mathcal A_l$, the function
$v_{\p}(M_{2i+l})$ is eventually linear with leading
coefficient $w$.

Now, suppose $\cx_A(M)=r\geq2$ and the result holds for $r-1$. By Proposition \ref{cxM}, for all $i\gg0$ (say $i\geq n_0$) there are
short exact sequences
\begin{equation}
\label{local-v-seq}
0\longrightarrow K_i
\longrightarrow M_{i+2}
\longrightarrow M_i(-w)
\longrightarrow0,
\end{equation}
where $\cx_A(K_i)=r-1$
and
$K_{i+1}\cong\Syz_1^A(K_i).$
Moreover, all the modules $M_i$ and $K_i$ are maximal Cohen-Macaulay. For each $l\in\{0,1\}$, let
$\mathcal B_l=\Ass_A(K_{2i+l})
\text{ for }i\gg0.$
Fix $\p\in\mathcal A_l.$

\medskip

\noindent
\textbf{Case 1.}
$\p\in\mathcal B_l$.

By the induction hypothesis,
there exists $c_{\mathfrak p,l}\in\mathbb Z$ such that
$v_{\mathfrak p}(K_{2i+l})=wi+c_{\mathfrak p,l}$
for all $i\gg0$. Hence
$v_{\mathfrak p}(K_{2i+l})=v_{\mathfrak p}(K_{2i+l-2})+w$ for all $i\gg0$. Applying \cite[Proposition~3.1(1)]{ghoshv} to the exact sequence
$$
0\longrightarrow K_{2i+l-2}
\longrightarrow M_{2i+l}
\longrightarrow M_{2i+l-2}(-w)
\longrightarrow0,
$$
we obtain $v_{\mathfrak p}(M_{2i+l})
\leq v_{\mathfrak p}(K_{2i+l-2}).$
Therefore,
$v_{\mathfrak p}(M_{2i+l})+w \leq
v_{\mathfrak p}(K_{2i+l}).
$ Now Lemma \ref{v-ses}, applied to
$0\longrightarrow K_{2i+l}
\longrightarrow M_{2i+l+2}
\longrightarrow M_{2i+l}(-w)
\longrightarrow0,$
gives
$v_{\mathfrak p}(M_{2i+l})+w
\leq v_{\mathfrak p}(M_{2i+l+2})
\leq v_{\mathfrak p}(K_{2i+l})$
for all $i\gg0$. Set
$\delta_i=v_{\mathfrak p}(M_{2i+l})-wi.$ The preceding inequality shows that $\{\delta_i\}$ is eventually
non-decreasing and bounded above by $c_{\mathfrak p,l}-w.$ Therefore, $\delta_i$ is eventually constant.
Consequently, there exists $b_{\mathfrak p,l}\in\mathbb Z$ such that
$v_{\mathfrak p}(M_{2i+l})=wi+b_{\mathfrak p,l}$
for all $i\gg0$.

\noindent
\textbf{Case 2.}
$\p\notin\mathcal B_l$.

By Proposition 3.1(2) of \cite{ghoshv} and \eqref{local-v-seq}, we have 
\begin{equation}\label{we}
     v_{\p}(M_{2i+l})\geq v_{\p}(M_{2i+l-2})+w \text{ for all } i\gg0.
\end{equation}
Now, Choose an integer $i_0$ of parity $l$ sufficiently large so that all the above stability
properties hold. For $t\geq1$, composing the surjections in
\eqref{local-v-seq}, we obtain
$M_{i_0+2t}
\twoheadrightarrow
M_{i_0}(-tw).$ Let $L_t$ denote its kernel.

\noindent
\textbf{Claim:} $\Ass_A(L_t)\subseteq\mathcal B_l.$

We proceed by induction on $t$. For $t=1$, we have $L_1=K_{i_0},$ and hence
$\Ass_A(L_1)=\mathcal B_l.$ Now, let $t\geq2$. Consider the surjection
$M_{i_0+2t}
\longrightarrow
M_{i_0+2t-2}(-w),$
whose kernel is $K_{i_0+2t-2}$. By the definition of $L_t$, the
preimage of $L_{t-1}(-w)$ under this surjection is precisely $L_t$.
Therefore, there is a short exact sequence
$$
0\longrightarrow K_{i_0+2t-2}
\longrightarrow L_t
\longrightarrow L_{t-1}(-w)
\longrightarrow0.
$$
Hence, by the induction hypothesis, the claim follows.

Since $\p\notin\mathcal B_l$, Lemma
\ref{uniform-annihilator} yields a homogeneous element
$a\in A\setminus\p$ and an integer $N\geq1$, independent of
$t$, such that $a^N L_t=0$ for all $t\geq1$. Choose a homogeneous element
$y\in M_{i_0}$ such that
$(0:_A y)=\p$  and $\deg(y)=v_{\p}(M_{i_0}).$
Regarded as an element of $M_{i_0}(-tw)$, its degree is
$v_{\p}(M_{i_0})+tw.$

Let $x\in M_{i_0+2t}$ be a homogeneous lift of $y$. For all $b\in\p$, we have
$bx\in L_t.$ Since $a^NL_t=0$, it follows that $b(a^Nx)=0.$ Hence, $\p(a^Nx)=0.$ Moreover, $a\notin\p=(0:_Ay)$, and hence $a^Ny\neq0.$ Thus,  $a^Nx\neq0$. Since $\Ass_A(M_{i_0+2r})\subseteq \text{Min}(A)$ and $\p\subseteq (0:_A a^Nx)\subseteq \q$, where $q\in \Ass_A(Aa^Nx)\subseteq \Ass_A(M_{i_0+2r})$, we obtain $(0:_A a^Nx)=\p.$
Therefore,
$$v_{\p}(M_{i_0+2t})
\leq \deg(a^Nx)=v_{\p}(M_{i_0})+tw+N\deg(a).$$
Hence, $ v_{\p}(M_{i_0+2t})-tw\leq v_{\p}(M_{i_0})+N\deg(a).$ Set
$\varepsilon_t=v_{\p}(M_{i_0+2t})-tw.$ Thus,   $\{\varepsilon_t\}$ is bounded above and, by \eqref{we}, non decreasing. Since
$\varepsilon_t\in\mathbb Z$, it is eventually constant. Thus there
exists $C\in\mathbb Z$ such that
$v_{\p}(M_{i_0+2t})=tw+C$
for all $t\gg0$. Writing $i_0=2j_0+l$ and replacing $t$ by $i-j_0$, we obtain
$v_{\p}(M_{2i+l})=wi+b_{\p,l}$
for some $b_{\p,l}\in\mathbb Z$ and all $i\gg0$. This completes the induction and the proof.
\end{proof}





\section{Asymptotic relations among regularity and \texorpdfstring{$v$}{v}-numbers}
In this section, we study the asymptotic relations between regularity and $v$-numbers. We compare the eventual leading coefficients of $v_{\mathfrak p}(F_{2i+l})$ and $\operatorname{reg}(F_{2i+l})$, and
show that, in the equal-degree case, the differences
$\reg(F_{2i+l})-v(F_{2i+l}) \text{ and } \reg(M_{2i+l})-v(M_{2i+l})$ are eventually constant. We conclude with a comparison in the Artinian case, where equality for the free modules characterizes pure resolutions. Finally, we present examples illustrating the
eventual linearity of regularity, local $v$-numbers, and the global
$v$-number.
\begin{proposition}
\label{v-reg-F}
Assume the same hypothesis as in \ref{hypo}. Then, for each $l\in\{0,1\}$, the eventual leading coefficient of $i\longmapsto v_{\p}(F_{2i+l})$ is at most the eventual leading coefficient of $i\longmapsto \reg(F_{2i+l})$, where $\p \in \Ass(A).$
\end{proposition}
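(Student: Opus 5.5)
The plan is to compare the two eventual linear formulas already computed for the free modules. By Proposition \ref{vFi} and its proof, for each $\p\in\Ass(A)$ we have $v_{\p}(F_i)=v_{\p}(A)+\gamma_i(M)$ for every $i$. By Corollary \ref{shiftcoro}, $\reg(F_i)=\reg(A)+\alpha_i(M)$. Since $v_{\p}(A)$ and $\reg(A)$ do not depend on $i$, the leading coefficient of $i\mapsto v_{\p}(F_{2i+l})$ is the eventual leading coefficient $a'_l$ of $\gamma_{2i+l}(M)$. Likewise, the leading coefficient of $i\mapsto\reg(F_{2i+l})$ is the eventual leading coefficient $a_l$ of $\alpha_{2i+l}(M)$. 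Both coefficients come from Proposition \ref{shift}, which gives $\gamma_{2i+l}(M)=a'_li+e'_l$ and $\alpha_{2i+l}(M)=a_li+e_l$ for $i\gg0$. The statement therefore reduces to showing $a'_l\leq a_l$, with no dependence on $\p$.

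To prove $a'_l\le a_l$, I use the trivial inequality $\gamma_i(M)\leq\alpha_i(M)$ for every $i$. It holds because both are the minimum and maximum of the same nonempty set $\{s:\beta_{i,s}\neq0\}$. The set is nonempty since $\projdim_A(M)=\infty$, so $F_i\neq0$ for all $i$. Hence $a'_li+e'_l\le a_li+e_l$ for all $i\gg0$. Dividing by $i$ and letting $i\to\infty$, or simply noting that a linear function of $i$ with leading coefficient $a'_l-a_l>0$ cannot stay bounded above by the constant $e_l-e'_l$, gives $a'_l\le a_l$.

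The only point needing care is the bookkeeping of constants, which do not affect leading coefficients. The proof of Proposition \ref{shift} writes $\gamma_{2i+l}(M)=a'_li-e'_l$, but the sign of the constant is irrelevant here. So I expect no real obstacle: the main step is recognizing that the $v$-number of a free module is governed by the minimal shift and the regularity by the maximal shift. After that, the comparison follows from $\gamma_i\le\alpha_i$ together with the eventual linearity from Proposition \ref{shift}.
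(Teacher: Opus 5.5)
Your proposal is correct and follows essentially the same route as the paper. Like the paper, you write $v_{\p}(F_i)=v_{\p}(A)+\gamma_i(M)$ and $\reg(F_i)=\reg(A)+\alpha_i(M)$, take the eventual linear formulas for $\gamma_{2i+l}(M)$ and $\alpha_{2i+l}(M)$ from Proposition~\ref{shift}, and deduce the inequality of leading coefficients from $\gamma_i(M)\le\alpha_i(M)$. Your added remarks, that the shift set is nonempty because $\projdim_A(M)=\infty$ and that the sign of the constant term does not matter, are valid clarifications the paper leaves implicit.
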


\begin{proof}
Write
$F_i=\bigoplus_{s\in\mathbb Z}A(-s)^{\beta_{i,s}}.$
Then $v_{\p}(F_i)=v_{\p}(A)+\gamma_i(M)$ and $\reg(F_i)=\reg(A)+\alpha_i(M).$ By Proposition \ref{shift}, for each $l\in\{0,1\}$, there exist
$a_l,b_l\in\{w_1,\ldots,w_c\}$ and $e_l,e_l'\in\mathbb Z$ such that
$$
\alpha_{2i+l}(M)=a_li+e_l \text{ and } \gamma_{2i+l}(M)=b_li+e_l'
$$ for all $i\gg0$. Since $\gamma_{2i+l}(M)\leq\alpha_{2i+l}(M)$
for all $i$, it follows that $b_l\leq a_l.$
Therefore, the eventual leading coefficient of $v_{\p}(F_{2i+l})$ is at most the eventual leading coefficient of
$\reg(F_{2i+l})$. In particular, the eventual leading coefficient of $v(F_{2i+l})$ is at most the eventual leading coefficient of $\reg(F_{2i+l})$ for each $l\in \{0,1\}$.
\end{proof}
The preceding result compares the leading coefficients. In the equal-degree case, the corresponding differences are eventually constant.
\begin{proposition}
\label{difference-reg-v}
Assume the same hypotheses as in \ref{hypo}, and suppose that
$\deg(f_j)=w$ for all $1\leq j\leq c$. Let $M$ be a graded maximal
Cohen-Macaulay $A$-module.
Then, for each $l\in\{0,1\}$, both
$\reg(F_{2i+l})-v(F_{2i+l})$
and
$\reg(M_{2i+l})-v(M_{2i+l})$
are eventually constant.
\end{proposition}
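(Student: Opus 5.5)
The plan is to subtract the eventual linear formulas already established, so that each difference becomes a constant.

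For the free modules $F_{2i+l}$:
\begin{enumerate}
\item By Corollary \ref{shiftcoro}, $\reg(F_{2i+l})=\reg(A)+\alpha_{2i+l}(M)$.
\item By the proof of Proposition \ref{vFi}, $v(F_{2i+l})=v(A)+\gamma_{2i+l}(M)$.
\item By Proposition \ref{shift}, $\alpha_{2i+l}(M)=a_li+e_l$ and $\gamma_{2i+l}(M)=a'_li+e'_l$ for $i\gg0$, with $a_l,a'_l\in\{w_1,\ldots,w_c\}$.
\item In the equal-degree case this set is $\{w\}$, so $a_l=a'_l=w$.
\end{enumerate}
Hence
\[
\reg(F_{2i+l})-v(F_{2i+l})=\reg(A)-v(A)+e_l-e'_l
\]
for all $i\gg0$, a constant. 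This step is routine. The only point to record is that the possible leading coefficients collapse to the single value $w$ because all $\deg(f_j)$ coincide.

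For the syzygies $M_{2i+l}$, first treat $\cx_A(M)\geq1$, which holds under Hypothesis \ref{hypo}.
\begin{enumerate}
\item Theorem \ref{Reg-syzygy} gives $\reg(M_{2i+l})=wi+r_l$ for $i\gg0$, for some $r_l\in\mathbb Z$.
\item Theorem \ref{local-v-linearity} gives $v(M_{2i+l})=wi+b_l$ for $i\gg0$, where $b_l=\min_{\p\in\mathcal A_l}b_{\p,l}$.
\end{enumerate}
Subtracting, $\reg(M_{2i+l})-v(M_{2i+l})=r_l-b_l$ for $i\gg0$.

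Theorem \ref{local-v-linearity} is stated for $\cx_A(M)\geq1$. This is automatic from $\projdim_A(M)=\infty$ in Hypothesis \ref{hypo}, as recorded in \ref{k}. It also follows that $M_i\neq0$ for all $i$, so neither $v(M_i)$ nor $\reg(M_i)$ is infinite and the difference is an honest integer.

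The only step needing care is making the index thresholds compatible.
\begin{enumerate}
\item Take $i$ larger than every threshold: those in Proposition \ref{shift}, Theorem \ref{Reg-syzygy}, Theorem \ref{stable-ass}, and Theorem \ref{local-v-linearity}.
\item Also take $i$ large enough that the reindexing in the definition of $\mathcal A_l$ is in force.
\end{enumerate}
Beyond that, there is no real obstacle; the content lies entirely in the earlier linearity theorems. I expect the proof to be a short two-part computation, one part for $F$ and one for $M$.
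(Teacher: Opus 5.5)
Your proposal is correct and matches the paper's proof. For the free modules you subtract $\reg(F_i)=\reg(A)+\alpha_i(M)$ and $v(F_i)=v(A)+\gamma_i(M)$ and use Proposition \ref{shift} with both leading coefficients forced to equal $w$; for the syzygies you subtract the eventual linear formulas from Theorems \ref{Reg-syzygy} and \ref{local-v-linearity}. You also state explicitly two points the paper leaves implicit: that $a_l=a'_l=w$, and that $\cx_A(M)\geq 1$ holds automatically.
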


\begin{proof}
For the free modules, we have
$\reg(F_i)=\reg(A)+\alpha_i(M)
$ and $v(F_i)=v(A)+\gamma_i(M).$
Hence, by Proposition \ref{shift}, for $i\gg0$,
\begin{align*}
\reg(F_{2i+l})-v(F_{2i+l})
=
\reg(A)-v(A)
+\alpha_{2i+l}(M)-\gamma_{2i+l}(M)
=
\reg(A)-v(A)+e_l-e_l'.
\end{align*}

On the other hand, by Theorem \ref{Reg-syzygy} and Theorem \ref{local-v-linearity}, we have 
$\reg(M_{2i+l})=wi+a_l$ and $v(M_{2i+l})=wi+b_l$, where $a_l,b_l\in \Z$,
for all $i\gg0$. Therefore,
$\reg(M_{2i+l})-v(M_{2i+l})
=a_l-b_l$ for all $i\gg0$.
\end{proof}

We next specialize to the Artinian case. In this setting, the $v$-number and the regularity of the ring coincide. This allows us to obtain a direct comparison between the regularity and the $v$-number of the free modules and the syzygies. Moreover, equality for the free
modules detects purity of the minimal graded free resolution.
\begin{proposition}
\label{reg-v-Fi-artinian}
Assume the same hypotheses as in \ref{hypo}, and suppose that $A$ is
Artinian. Then
\begin{enumerate}
\item $\reg(F_i)\geq v(F_i)$ for all $i\geq 0$. Moreover, equality holds for all $i\geq 0$ if and only if $M$ has a pure minimal graded free resolution.
\item $\reg(M_{2i+l})\geq v(M_{2i+l})$ for all $i\geq 0$.
\end{enumerate}
In particular, if $\deg(f_i)=w$ for all $1\leq i\leq c$, then for each $l\in\{0,1\}$, both
$\reg(F_{2i+l})-v(F_{2i+l})$
and
$\reg(M_{2i+l})-v(M_{2i+l})$
are eventually non-negative constants.
\end{proposition}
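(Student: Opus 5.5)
The plan is to exploit the fact that, when $A$ is Artinian, everything reduces to degree bookkeeping in finite-length modules. First observe that $\dim A=0$ forces $\Ass_A(L)=\{\m\}$ for every nonzero finitely generated graded $A$-module $L$. In particular every such $L$ is maximal Cohen-Macaulay, so no reduction is needed for $M$. Moreover $H^0_{A_+}(L)=L$ and $H^i_{A_+}(L)=0$ for $i>0$, so $\reg(L)=\operatorname{end}(L)$.

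Next I would prove the general inequality $v(L)\leq\reg(L)$ for any nonzero such $L$. Pick a nonzero homogeneous $x\in L_{\operatorname{end}(L)}$. Every element of $\m$ has positive degree, so $\m x\subseteq L_{>\operatorname{end}(L)}=0$. Hence $\m\subseteq(0:_Ax)$, and since $x\neq0$ this annihilator is proper, so $(0:_Ax)=\m$. This gives $v(L)=v_{\m}(L)\leq\operatorname{end}(L)=\reg(L)$.

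Applying this to $L=M_{2i+l}$ gives part (2) directly, and applying it to $L=F_i$ gives the inequality in part (1). The step that needs the complete intersection hypothesis is the equality $v(A)=\reg(A)$. Since $A$ is an Artinian complete intersection, it is Gorenstein, so its socle $(0:_A\m)$ is a one-dimensional $k$-vector space. That socle must contain $A_{\operatorname{end}(A)}\neq0$, so it is concentrated in degree $\operatorname{end}(A)$. Any homogeneous $x$ with $(0:_Ax)=\m$ lies in the socle, so $v(A)=\operatorname{end}(A)=\reg(A)$. (Alternatively one could check $\operatorname{end}(A)=\sum_j(w_j-1)$ via the Koszul resolution, but the Gorenstein socle argument is cleaner.)

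For the equality criterion in (1), I would use the formulas $\reg(F_i)=\reg(A)+\alpha_i(M)$ and $v(F_i)=v(A)+\gamma_i(M)$ from the proofs of Corollary \ref{shiftcoro} and Proposition \ref{vFi}. With $v(A)=\reg(A)$ these give
\[
\reg(F_i)-v(F_i)=\alpha_i(M)-\gamma_i(M)\geq0.
\]
So equality holds for all $i\geq0$ exactly when $\alpha_i(M)=\gamma_i(M)$ for all $i$, which is the definition of a pure resolution. For the final assertion, in the equal-degree case Proposition \ref{difference-reg-v} already shows that both differences are eventually constant. By (1) and (2) these constants are non-negative.

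The only genuinely non-formal point is the identification $v(A)=\reg(A)$. That is where I would be careful: it needs $A$ Gorenstein (one-dimensional socle) and not merely Artinian.
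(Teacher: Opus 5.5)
Your proposal is correct and follows the paper's argument. The paper also uses $\Ass_A(L)=\{\m\}$ and $\reg(L)=\operatorname{end}(L)$ to get $v\leq\reg$, and then uses $v(A)=\reg(A)$ together with $\reg(F_i)=\reg(A)+\alpha_i(M)$ and $v(F_i)=v(A)+\gamma_i(M)$ to obtain $\reg(F_i)-v(F_i)=\alpha_i(M)-\gamma_i(M)$. The only differences are that you prove $v(A)=\reg(A)$ directly from the one-dimensional, hence single-degree, socle of the Gorenstein ring $A$, whereas the paper cites an external theorem for this. You also spell out that the final clause follows from Proposition \ref{difference-reg-v} combined with the inequalities, which the paper leaves implicit.
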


\begin{proof}
Let $N$ be a nonzero finitely generated graded $A$-module.  Since $A$ is Artinian, so is $N$. It follows that $\Ass_A(N)=\{\m\}$. Hence $v(N)=v_{\m}(N)$. Let $\m=(0:_{N}x)$ such that $v(N)=\deg(x)$. Since $N$ is Artinian, it follows that $\deg(x)\leq \text{end}(N)=\reg(N).$ This implies that $\reg(F_i)\geq v(F_i)$ and $\reg(M_{2i+l})\geq v(M_{2i+l})$ for all $i\geq 0$.

Since $A$ is an Artinian complete intersection, hence Gorenstein. By Theorem 3.2 of \cite{saha}, 
$v(A)=\reg(A).$ Write
$F_i=\bigoplus_{s\in\mathbb Z}A(-s)^{\beta_{i,s}}.$
Then $\reg(F_i)=\reg(A)+\alpha_i(M)$
and
$v(F_i)=v(A)+\gamma_i(M).$
Hence,
$\reg(F_i)-v(F_i)=\alpha_i(M)-\gamma_i(M)$ for all $i\geq 0.$ Furthermore,
equality holds if and only if $\alpha_i(M)=\gamma_i(M)$ for all $i\geq 0.$ This proves the result.
\end{proof}

We now give two examples illustrating the linearity of regularity, local $v$-numbers, and the global $v$-number in our setting. The first example concerns the case $\cx_A(M)=1$, while the second treats the case $\cx_A(M)=2$. In the second example, we use the total complex; see \cite[1.2.6]{weibel} for its definition.

\begin{example}
\normalfont
Let $A=k[X,Y]/(X^2Y),$ where $k$ is an infinite field, and let $x,y$ denote the images of $X,Y$ in $A$. Note that here $w=3.$ Set $M=A/(x)\oplus A/(y).$ Since $A/(x)\cong k[y]\text{ and }A/(y)\cong k[x],$ both summands are one-dimensional Cohen-Macaulay $A$-modules. Hence $M$ is a graded maximal Cohen-Macaulay $A$-module. The minimal graded free resolutions of $A/(x)$ and $A/(y)$ are induced by the matrix factorizations
$X^2Y=X\cdot XY \text{ and } X^2Y=Y\cdot X^2,$
respectively; see \cite[Theorem~6.1]{Eisenbud}. Let $\mathbb F$ and $\mathbb G$ denote these resolutions. Then $\mathbb F\oplus\mathbb G$ is the minimal graded free resolution of $M.$ In particular, $\beta_i^A(M)=2$ for every $i\geq0$, and hence $\cx_A(M)=1.$

For $i\geq0$, set $M_i=\operatorname{Syz}_i^A(M),\,\, M_0=M.$ The two matrix factorizations give
$M_{2i}\cong\bigl(A/(x)\oplus A/(y)\bigr)(-3i)$ and
$M_{2i+1}\cong \bigl(A/(xy)\oplus A/(x^2)\bigr)(-(3i+1))$
for every $i\geq0$. Consequently,
$\operatorname{Ass}_A(M_i)=\{(x),(y)\}$ for every $i\geq0$. We have $v_{(x)}(M)=v_{(y)}(M)=0.$
Moreover, $v_{(x)}\bigl(A/(xy)\oplus A/(x^2)\bigr)=
v_{(y)}\bigl(A/(xy)\oplus A/(x^2)\bigr)=1.$
Therefore,
$v_{(x)}(M_{2i})=v_{(y)}(M_{2i})=3i$ and $v_{(x)}(M_{2i+1})=v_{(y)}(M_{2i+1})=3i+2.$ It follows that
$v(M_{2i})=3i\text{ and }v(M_{2i+1})=3i+2.$

Furthermore,
$\reg(A/(x)\oplus A/(y))=0$ and $\reg(A/(xy)\oplus A/(x^2))=1.$ Thus, $\reg(M_{2i})=3i$ and $\reg(M_{2i+1})=3i+2.$ Hence, for every $\mathfrak p\in\{(x),(y)\}$, $l\in\{0,1\}$, and $i\geq0$, we have
$v_{\mathfrak p}(M_{2i+l})=v(M_{2i+l})=\reg(M_{2i+l}).$

Since $(\F\oplus \G)_{2i+l}=A(-(3i+l))^2$ for each $l\in\{0,1\}$ and
$\reg(A)=v_{\p}(A)=v(A)=2$ for every $\p\in\Ass(A)$, we have
$\reg(F_{2i+l})=3i+l+2,$ and similarly,
$v_{\p}(F_{2i+l})=v(F_{2i+l})=3i+l+2.$
\end{example}

\begin{example}\label{exam2}
\normalfont
Let $A=k[X,Y,Z]/(X^2,YZ),$ where $k$ is an infinite field, and let $x,y,z$ denote the images of $X,Y,Z$ in $A$. Let $M=A/(x,y).$ Note that here $w=2$ and $M$ is a graded maximal Cohen-Macaulay $A$-module. We also have $\Ass(A)=\{\p=(x,y),\q=(x,z)\}$ and $\Ass_A(M)=\{\p\}$.

Set
$R={k[X]}/{(X^2)}
\text{ and }S={k[Y,Z]}/{(YZ)}.$ By  \cite[Chapter~III, \S4, Proposition~1]{Bourbaki}, we have
$A\cong R\otimes_k S$ and $M\cong R/(x)\otimes_k S/(y).$ Now, we have 
$$\mathbb F:
\cdots
\xrightarrow{x}R(-2)
\xrightarrow{x}R(-1)
\xrightarrow{x}R
\longrightarrow R/(x)
\longrightarrow0.
$$ and 
$$\mathbb G:
\cdots
\xrightarrow{y}S(-2)
\xrightarrow{z}S(-1)
\xrightarrow{y}S
\longrightarrow S/(y)
\longrightarrow0,
$$
 are the minimal graded free resolutions of $R/(x)$ over $R$ and $S/(y)$ over $S$, respectively. Let $F_i$ and $G_i$ be the $i$th graded free module of $\F$ and $\mathbb G$, respectively. Note that $F_i=R(-i)$ and $G_i=S(-i)$ for all $i\geq 0.$ By \cite[Lemma~3.6]{Bergh} and the minimality of $\F$ and $\G$, the total complex
$\operatorname{Tot}(\F \otimes_k \mathbb G)$
is the minimal graded free resolution of
$R/(x)\otimes_kS/(y)\cong M$
over $R\otimes_kS\cong A.$ Note that for all $n\geq0$, the  $n$-th graded free module of $\operatorname{Tot}(\F \otimes_k \mathbb G)$ is $$T_n=\bigoplus_{a+b=n}F_a\otimes_kG_b=\bigoplus_{a+b=n}R(-a)\otimes_k S(-b)\cong
\bigoplus_{a+b=n}A(-(a+b))=A(-n)^{n+1}.$$ 
Therefore, $\beta_n^A(M)=n+1$ for all $n\geq0$, and hence $\cx_A(M)=2.$ For $i\geq1$, set
$M_i=\Syz_i^A(M).$
Since $M_i\subseteq T_{i-1},$ we have
$\Ass_A(M_i)
\subseteq \Ass_A(T_{i-1})=\Ass(A)=\{\p,\q\}.$
On the other hand,
$(0:_A xz)=(x,y)=\p$ and
$(0:_A xy)=(x,z)=\q.$

Let $e$ be the basis element corresponding to the summand $F_{i-1}\otimes_kG_0\subseteq T_{i-1}.$ For $i\geq2$, the differential on this summand is induced by multiplication by $x$. Therefore, 
$d(xz\,e)=0 \text{ and } d(xy\,e)=0.$
Thus, $xz\,e,\ xy\,e\in M_i,$
and their annihilators are $\p$ and $\q$,
respectively. The same conclusion is immediate for $i=1$. Hence
$\Ass_A(M_i)=\{\p,\q\}$
for all $i\geq1$.

We next compute the local $v$-numbers. Notice that
$(0:_A\p)=(xz)\text{ and }(0:_A\q)=(xy).$
Since
$T_{i-1}=A(-(i-1))^i,$
every nonzero homogeneous element of
$(0:_{T_{i-1}}\p)$ has degree at least
$(i-1)+2=i+1.$ As $M_i\subseteq T_{i-1}$, by \cite[Proposition 3.1]{ghoshv}, we have
$v_{\p}(M_i)\geq i+1.$
On the other hand, $xz\,e\in M_i$ has degree $i+1$ and annihilator
$\p$. This implies that
$v_{\p}(M_i)=i+1.$
Similarly, using $(0:_A\q)=(xy)$ and the element $xy\,e\in M_i$, we
obtain $v_{\q}(M_i)=i+1.$
Consequently, $v(M_i)=i+1$
for all $i\geq1$.

We next compute the regularity. Note that by \ref{hypo}, $\reg(A)=2$ and since $M\cong k[z]$, $\reg(M)=0.$ Since $M_i$ is a graded maximal Cohen-Macaulay module, from the exact sequence of graded $A$-modules $$0\longrightarrow M_{i+1}\longrightarrow T_i\longrightarrow M_i\longrightarrow 0,$$ we obtain $\reg(T_i)=\max\{\reg(M_{i+1}),\reg(M_i)\}$ for all $i\geq 0.$ Since $T_i\cong A(-i)^{i+1}$, it follows that $\reg(T_i)=i+\reg(A)$. Now,   $2=\reg(T_0)=\max\{\reg(M_{1}),0\}.$ It follows that $\reg(M_1)=2$. Inductively, we can conclude that $\reg(M_i)=i+1$ for all $i\geq 1$.

Thus
$\reg(M_i)=v(M_i)=v_{\p}(M_i)=v_{\q}(M_i)=i+1$
for all $i\geq1$. In particular,
$\reg(M_{2i})=v(M_{2i})=v_{\p}(M_{2i})
=v_{\q}(M_{2i})=2i+1=wi+1$
for $i\geq1$ and $\reg(M_{2i+1})=v(M_{2i+1})=v_{\p}(M_{2i+1})=v_{\q}(M_{2i+1})=2i+2=wi+2$
for $i\geq0$, where $w=2=\text{deg}(X^2)=\text{deg}(YZ)$.

Since $T_{2i+l}=A(-(2i+l))^{2i+l+1},$ for each $l\in \{0,1\}$ and $\reg(A)=v_{\p}(A)=v(A)=2$ for every $\p\in\Ass(A)$, we obtain $\reg(T_{2i+l})=v_{\p}(T_{2i+l})=v(T_{2i+l})=2i+l+2.$
\end{example}

\section*{Acknowledgements}
The author would like to thank generative AI for its assistance in the preparation of this paper. In particular, it was used to assist with the proof of Theorem \ref{local-v-linearity}, to suggest Lemma \ref{uniform-annihilator} and Example \ref{exam2}, and to improve the writing of the manuscript.

\providecommand{\bysame}{\leavevmode\hbox to3em{\hrulefill}\thinspace}
\providecommand{\MR}{\relax\ifhmode\unskip\space\fi MR }
\providecommand{\MRhref}[2]{
  \href{http://www.ams.org/mathscinet-getitem?mr=#1}{#2}
}

\end{document}